\newtheorem{thm}{Theorem}[section]
\newtheorem{cor}{Corollary}[section]
\newtheorem{ex}{Example}[section]
\newtheorem{remark}{Remark}[section]
\newtheorem{lemma}{Lemma}[section]
\newtheorem{defo}{Definition}[section]
\newtheorem{prop}{Proposition}[section]
\title{\textbf{ON GRADED NIL-GOOD RINGS}}
\author{ I. Namrok\thanks{ismailnamrok1993@gmail.com}~,~ H. Choulli\thanks{hananchoulli@hotmail.com}~ and H. Mouanis\thanks{hmouanis@yahoo.fr} }
\affil{Faculty of Sciences Dhar El Mahraz, Sidi Mohamed Ben Abdellah
 University, Fez, Morocco.}
\date{}
\begin{document}
\maketitle
\begin{abstract} In this paper we introduce and study the notion of a graded nil-good ring which is graded by a group. We investigate extensions of graded nil-good rings to graded group rings. Further, we discuss graded matrix ring extensions and trivial extensions of graded nil-good rings. Furthermore, we show that the class of graded rings which are nil-good and the class of graded nil-good rings are not comparable. Moreover, we discuss the question whether or not the nil-good property of the component, which corresponds to the identity element of the grading group, implies that the whole graded ring is a graded nil-good ring.   \end{abstract}
\vskip0.1in
\textbf{keywords}: Graded rings and modules, nil-good rings, group rings, matrix rings, trivial ring extensions.\\ \\
\textbf{Mathematics Subject Classification}. Primary: 16W50. Secondary: 16U99, 16S34, 16S50.
\section{Introduction}

In 1977, W.K Nicholson has introduced in \cite{2} a new class of rings called \textit{clean rings} in which every element
can be written as a sum of an idempotent and a unit. Since then, many works have been done about rings in which elements can be written as a sum of two elements with certain properties. In particular, some authors have investigated rings in which elements can be written as a sum of a nilpotent element and an element with a certain property. As an example of these rings there is, \textit{nil-clean rings}, \textit{fine rings} and \textit{nil-good rings} introduced respectively in \cite{12}, \cite{22} and \cite{1}. 

Some authors have given  graded versions of some of the previous classes of rings, such as \textit{Graded
nil-clean rings}  introduced in \cite{15}, and \textit{graded 2-nil-good rings}  introduced in \cite{16}.\\
 In this work, we define  and study \textit{graded nil-good rings} as a graded version of  nil-good rings introduced in \cite{1}. A \textit{nil-good} ring is defined as a ring whose every element is either nilpotent or a sum of a unit and a nilpotent. This class of rings is a
generalization of the notion of
fine rings (see \cite{22}), whose every nonzero element
can be written as a sum of a unit and a nilpotent element.\\
Our objective in this paper is to introduce and study the graded version of nil good ring. Thus, we define 
 \textit{graded nil-good ring} as a group graded ring in which every homogeneous element is either nilpotent or can be written as a sum of a homogeneous unit and a homogeneous nilpotent. We first give some properties of graded nil-good rings which
represent graded versions of results on nil-good rings. Moreover, we discuss when the graded group ring is graded nil-good. This leads to an interesting question of how the graded
nil-good property of a group graded ring depends on the nil-good property of the
component which corresponds to the identity element of the grading group. It is proved
that the nil-good property of the component corresponding to the identity element of
the grading group does not imply the graded nil-good property of the whole graded
ring in general. However, under some extra hypothesis, this implication becomes true. Also, many results concerning trivial extensions of such rings have been obtained. Finally, we give  a sufficient condition for the graded matrix ring over a graded commutative nil-good ring to be graded nil-good.
\section{Preliminaries }
~~~~All rings are assumed to be associative with identity. If $R$ is a ring then $J(R)$ denotes the Jacobson radical of $R$, $U(R)$ is the multiplicative group of units of $R$, and $Nil(R)$ denotes the set of nilpotent elements of $R$.

Let $R$ be a ring, $G$ a group with identity element $e$, and let $\{R_g\}_{g\in G}$ be a family of additive subgroups of $R$. $R$ is said to be $G$-graded if $R= \bigoplus_{g\in G}R_g$ and $R_gR_h\subseteq R_{gh}$ for all $g,h\in G$. The set $H= \bigcup_{g\in G}R_g $ is called the homogeneous part of $R$, elements of $H$ are called homogeneous, and subgroups $R_g$ ($g\in G$) are called components. If $a\in R_g$, then we say that $a$ has the degree $g$.

A right ideal (left, two-sided) $I$ of a $G$-graded ring $R= \bigoplus_{g\in G}R_g$ is called homogeneous or graded if $I= \bigoplus_{g\in G}I\cap R_g$. If $I$ is a two-sided homogeneous ideal, then $R/I$ is a $G$-graded ring with components $(R/I)_g=R_g/I\cap R_g$. A graded ring $R$ is graded-nil if every homogeneous element of $R$ is nilpotent, and a homogeneous ideal $I$ is called \textit{graded-nil} if every homogeneous element of $I$ is nilpotent.

If $R= \bigoplus_{g\in G}R_g$ is a $G$-graded ring, then a  $G$-graded $R$-module is an $R$-module $M$ such that $M= \bigoplus_{g\in G}M_g$, where $M_g$ are additive subgroups of $M$, and such that $R_hM_g\subseteq M_{hg}$ for all $g,h\in G$. A submodule $N$ of a $G$-graded $R$-module $M= \bigoplus_{g\in G}M_g$ is called homogeneous if $N= \bigoplus_{g\in G}N\cap M_g$.

A homogeneous right ideal $\mathfrak{m}$ of a graded ring $R$ is said to be graded-maximal right ideal if it is contained in no other proper homogeneous right ideal of $R$. A ring $R$ is graded-local if it has a unique graded-maximal right ideal.  For more  details on graded rings theory, we refer to \cite{14}.

The graded Jacobson radical $J^g(R)$ of a $G$-graded ring $R$ is defined to be the intersection of all graded-maximal right ideals of $R$. Moreover, $J^g(R)$ is a homogeneous two-sided ideal (see for instance \cite[Proposition 2.9.1]{14}).

Let $R= \bigoplus_{g\in G}R_g$ be a $G$-graded ring. According to \cite{13}, we have that the group ring $R[G]$ is $G$-graded with the $g$-component $(R[G])_g= \sum_{h\in G}R_{gh^{-1}}h$ and with the multiplication defined by $(r_gg')(r_hh')=r_gr_h(h^{-1}g'hh')$, where $g,g',h, h' \in G$, $r_g \in R_g$ and $r_h\in R_h$.

Let $A$ be a commutative ring, $E$ an $A$-module and $R:=A\propto E$ the set of pairs $(a,e)$ with pairwise addition and multiplication given by $(a,e)(b,f)=(ab,af+be)$. $R$ is called the \textit{trivial ring extension} of $A$ by $E$. Considerable work has been concerned with trivial ring extensions of commutative rings. Part of it has been summarized in Glaz's book \cite{4} and Huckaba's book \cite{20}. In this paper, we consider the same construction for noncommutative rings. In \cite{17}, it has been proved that if $R=A\propto E $ where $A$ is a commutative ring, then $U(R)=U(A)\propto E$ and $Nil(R)=Nil(A)\propto E$. We can check easily that these properties hold true even if $A$ is a non commutative ring. 

Let $A$ be a $G$-graded ring and $E$ a $G$-graded $A$-module. According to \cite[Section 3]{17},  the trivial ring extension $R=A\propto E$ is $G$-graded where $R_g=A_g\oplus E_g$.

If $R$ is a $G$-graded ring and $n$ a natural number, then the matrix ring $M_n(R)$ can be seen as a $G$-graded ring in the following manner.\\ Let $\overline{\sigma}=(g_1,\dots,g_n)\in G^n$, $\lambda\in G$ and $M_n(R)_{\lambda}(\overline{\sigma})=(a_{ij})_{n\times n}$, where\\ $a_{ij}\in R_{g_i\lambda g_j^{-1}}$, $i,j \in \{1,\dots,n\}$. Then, $M_n(R)=\displaystyle \bigoplus_{\lambda \in G}M_n(R)_{\lambda}(\overline{\sigma})$ is a $G$-graded ring with respect to usual matrix  addition and multiplication. This ring is denoted by $M_n(R)(\overline{\sigma})$.\\Note that if $\overline{\sigma}=(e,e,\dots,e)\in G^n$, then $M_n(R)_{\lambda}(\overline{\sigma})=\begin{pmatrix}
R_{\lambda} &R_{\lambda}&\dots&R_{\lambda}\\R_{\lambda}&R_{\lambda}&\dots&R_{\lambda}\\ \vdots&\vdots&\dots&\vdots \\ R_{\lambda}&R_{\lambda}&\dots&R_{\lambda} \end{pmatrix}$.

\section{Graded nil-good rings}

  ~~Let $G$ be a group with identity $e$.
\begin{defo}
A homogeneous element of a $G$-graded ring is said to be \textit{graded nil-good} if it is either nilpotent or it can be written as a sum of a homogeneous unit and a homogeneous nilpotent. A $G$-graded ring is said to be graded nil-good if each of its homogeneous elements is graded nil-good.
\end{defo}
\begin{ex}\label{ex3.1}
Let $G=\{e,g\}$ be a cyclic group of order 2 and $R:= \mathbb{Z}_2[X]/(X^2)$. We have that $R= \mathbb{Z}_2\bigoplus \mathbb{Z}_2\overline{X} $ is a $G$-graded ring. Since $\mathbb{Z}_2$ is a nil-good ring (see \cite[Example 1]{1}) and every element of $\mathbb{Z}_2\overline{X}$ is nilpotent, then $R$ is a graded nil-good ring.
\end{ex}
\begin{remark}\label{r3.1}

Let $R=\bigoplus_{g\in G}R_g$ be a $G$-graded ring. If $a=u+n$ where $u$ (resp. $n$) is a homogeneous unit (resp. nilpotent), then $a$, $u$ and $n$ are all of the same degree. Indeed, assume that the degree of $a$ is $g$. If we suppose that the degree of $u$ is not $g$ we will have $a=0$ or $a=n$. Both cases lead to a contradiction. Hence,  $u$ and $g$ have the same degree $g$.
\end{remark}
\begin{prop}\label{p3.1}
Let $A$ be a nil-good ring. Then, the Laurent polynomial ring  $R:= \bigoplus_{n\in \mathbb{Z}}AX^n$ (with $R_0=A$) is a $\mathbb{Z}$-graded nil-good ring.
\end{prop}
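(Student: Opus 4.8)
The plan is to describe the homogeneous elements of $R$ explicitly and then reduce the graded nil-goodness of each of them to the nil-good property of $A$. A homogeneous element of degree $n$ is exactly of the form $aX^n$ with $a\in A$, since $R_n=AX^n$. The two structural facts I would record first are that the indeterminate $X$ is central in $R$ and that each power $X^n$ is a homogeneous unit of degree $n$, with inverse $X^{-n}$. These are precisely what let me transport units and nilpotents of $A=R_0$ to homogeneous units and homogeneous nilpotents of arbitrary degree.

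Next, I fix a homogeneous element $aX^n$ and split into two cases according to whether $a$ is nilpotent in $A$. If $a\in Nil(A)$, say $a^k=0$, then using the centrality of $X$ one computes $(aX^n)^k=a^kX^{nk}=0$, so $aX^n$ is itself nilpotent and hence graded nil-good. If $a\notin Nil(A)$, then because $A$ is nil-good I may write $a=u+m$ with $u\in U(A)$ and $m\in Nil(A)$; multiplying through by the central element $X^n$ gives $aX^n=uX^n+mX^n$. Here $uX^n$ is a homogeneous unit of degree $n$, its inverse being $u^{-1}X^{-n}$, and $mX^n$ is a homogeneous nilpotent of degree $n$, since $(mX^n)^k=m^kX^{nk}=0$ whenever $m^k=0$. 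Thus $aX^n$ is a sum of a homogeneous unit and a homogeneous nilpotent of the same degree, consistent with Remark \ref{r3.1}.

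Since every homogeneous element falls into one of these two cases, $R$ is graded nil-good. I do not expect a genuine obstacle: the argument amounts to the observation that multiplication by the central unit $X^n$ is degree-shifting but preserves both the class of units and the class of nilpotents. The only points deserving care are the centrality of $X$, so that the identity $(aX^n)^k=a^kX^{nk}$ really holds even when $A$ is noncommutative, and the verification that a unit, respectively a nilpotent, of $A$ remains such in $R$; both are immediate because $A=R_0$ is a subring of $R$.
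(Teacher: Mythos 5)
Your proof is correct and follows essentially the same route as the paper: decompose $a$ in $A$ using nil-goodness and multiply through by $X^n$ to get a homogeneous unit plus homogeneous nilpotent. The only cosmetic difference is that the paper merges your two cases into one by writing $a=u+b$ with $u\in U(A)\cup\{0\}$, while you split off the nilpotent case explicitly and spell out the verifications (centrality of $X$, inverse $u^{-1}X^{-n}$) that the paper leaves implicit.
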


\begin{proof}
Let $aX^n$ ($n\in \mathbb{Z}$) be a homogeneous element of $R$. Since $A$ is nil-good, then $a=u+b$ where $u\in U(A)\cup \{0\}$ and $b\in Nil(A)$, and so $aX^n=uX^n+bX^n$, since $uX^n\in U(R)\cup \{0\}$ and $bX^n\in Nil(R)$ and both are homogeneous, then $R$ is graded nil-good.

\end{proof}
\begin{ex}\label{e3.2}
According to the previous proposition, the  $\mathbb{Z}$-graded domain $R= \bigoplus_{n\in \mathbb{Z}}\mathbb{Z}_2X^n$ is graded nil-good.\\Let us notice that $R$ is not a nil-good ring since $1+X$ is not a nil-good element. Indeed, if we suppose that $1+X$ is nil-good. Then, since $R$ is a domain $Nil(R)=(0)$, and so $1+X$ must be a unit in $R$. According to \cite[page 1]{5}, we have that $1+X$ is a homogeneous which is a contradiction. Which prove that a graded nil-good ring is not necessary a nil good ring.
\end{ex}
\begin{ex}\label{e3.3}
Let $G=\{e,g\}$ be a cyclic group of order 2, and  $R:=M_2(\mathbb{Z}_2)$.\\Thus,  $R=\begin{pmatrix} 
\mathbb{Z}_2 & 0 \\
0& \mathbb{Z}_2
\end{pmatrix} \bigoplus \begin{pmatrix} 
0 & \mathbb{Z}_2 \\
\mathbb{Z}_2 & 0
\end{pmatrix} $ is a $G$-graded ring. $R$ is not a graded nil-good ring since the homogeneous element $\begin{pmatrix}
1 & 0 \\ 0 & 0
\end{pmatrix} \in R_e$ is not graded nil-good. But $R$ is a nil-good ring (see \cite[Example 2]{1}), hence a nil-good ring is not necessarily a graded nil-good ring. 
\end{ex}
According to Example \ref{e3.2} and Example \ref{e3.3}, we deduce that the class of graded rings which are nil-good and the class of graded nil-good rings are not comparable.

\begin{prop}\label{p3.2}
\begin{enumerate}
  \item Let $R=\bigoplus_{g\in G}R_g$ be a $G$-graded nil-good ring. Then, $R_e$ is a nil-good ring. 

\item Let $R$ be a $G$-graded commutative ring. If $R$ is graded nil-good, then every homogeneous element of $R$ is either unit or nilpotent.
\item Let $R=\bigoplus_{g\in G}R_g$ be a $G$-graded nil-good ring. If $U(R)=U(R_e)$, then  every $x\in R_g$ ($g\neq e$)  is nilpotent.
\end{enumerate}
\end{prop}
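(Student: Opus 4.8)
The plan is to treat the three parts separately, with Remark \ref{r3.1} and one standard fact about graded units serving as the common engine. That fact is: if $u$ is a homogeneous unit of degree $g$ in a $G$-graded ring, then $u^{-1}$ is homogeneous of degree $g^{-1}$. I would record this first, proving it by writing $u^{-1}=\sum_{h}v_h$ with $v_h\in R_h$, comparing the homogeneous components of $uu^{-1}=1\in R_e$ and $u^{-1}u=1$ to conclude $uv_{g^{-1}}=v_{g^{-1}}u=1$, and then invoking uniqueness of two-sided inverses so that $u^{-1}=v_{g^{-1}}\in R_{g^{-1}}$.

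For part (1), I would take an arbitrary $a\in R_e$. Since $R$ is graded nil-good and $a$ is homogeneous, either $a$ is nilpotent, in which case it is nilpotent in the subring $R_e$ as well (nilpotency is preserved and $1\in R_e$), or $a=u+n$ with $u$ a homogeneous unit and $n$ a homogeneous nilpotent. By Remark \ref{r3.1}, $u$ and $n$ both have degree $e$, so $u,n\in R_e$. The point to check is that $u$ is a unit of $R_e$ and not merely of $R$: by the graded-units fact its inverse $u^{-1}$ is homogeneous of degree $e^{-1}=e$, hence $u^{-1}\in R_e$, giving $u\in U(R_e)$. Likewise $n\in Nil(R_e)$. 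Thus every element of $R_e$ is nilpotent or a sum of a unit and a nilpotent of $R_e$, so $R_e$ is nil-good.

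For part (2), let $a$ be homogeneous and not nilpotent; graded nil-goodness gives $a=u+n$ with $u$ a homogeneous unit and $n$ a homogeneous nilpotent. Writing $a=u(1+u^{-1}n)$ and using commutativity, $u^{-1}n$ is nilpotent, so $1+u^{-1}n$ is a unit via the usual finite geometric inverse, whence $a$ is a unit. For part (3), let $x\in R_g$ with $g\neq e$ and suppose $x$ is not nilpotent. Then $x=u+n$ with $u$ a homogeneous unit, and by Remark \ref{r3.1}, $u\in R_g$. Since $u\in U(R)=U(R_e)\subseteq R_e$, we get $u\in R_g\cap R_e$, which is $\{0\}$ because the grading sum is direct and $g\neq e$. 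This forces $u=0$, contradicting that $u$ is a unit in a nonzero ring; hence $x$ is nilpotent.

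The routine content here is light; the one place requiring care, and thus the main obstacle, is part (1), specifically ensuring that a homogeneous unit of $R$ of degree $e$ is genuinely invertible \emph{inside} $R_e$. This is exactly what the homogeneity of $u^{-1}$ secures, which is why the graded-units fact must be isolated first. Parts (2) and (3) then reduce to the commutative identity ``unit $+$ nilpotent $=$ unit'' and to the directness of the grading decomposition, respectively.
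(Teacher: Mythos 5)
Your proposal is correct and follows essentially the same route as the paper's own proof: degree comparison (the content of Remark \ref{r3.1}) for parts (1) and (3), and the commutative fact that a unit plus a nilpotent is a unit for part (2), which the paper cites from Atiyah--MacDonald while you prove it inline via the geometric series. If anything, your treatment of part (1) is slightly more careful than the paper's, since you explicitly verify that the inverse of a homogeneous unit of degree $e$ lies in $R_e$ (so that $u\in U(R_e)$ rather than merely $u\in U(R)\cap R_e$), a point the paper passes over silently.
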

\begin{proof}
\begin{enumerate}
    \item Let $a\in R_e$ be a non nilpotent element. Since $R$ is graded nil-good, then $a=u+n$ where $u\in U(R)\cap R_g$ and $n\in Nil(R)\cap R_h$ for some $g,h\in G$. Suppose that $g\neq e$, in this case we will have that $a$ is nilpotent which is a contradiction, Hence, $g=e$, and so $u,n\in R_e$. Thus, $R_e$ is a nil-good ring.
    \item Since $R$ is a commutative ring, then by \cite[page 10]{21} we have that $U(R)+Nil(R)=U(R)$ . Hence, if $R$  is commutative  graded nil-good, then every homogeneous element of $R$ is either nilpotent or unit.
    \item Let $x \in R_g$ where $g\neq e$. Since $R$ is graded nil-good, then  $x=u+n$ where $u\in U(R_e)\cup \{0\}$ and $n$ is homogeneous nilpotent element of $R$. Suppose that $u\neq 0$, so by comparing degrees we have that $x=0$ or $x=n$, this implies that $u=-n$ which is a contradiction since $u\in U(R_e)$. Hence, $u=0$ and so $x=n \in Nil(R)$.   
\end{enumerate}

\end{proof}

\begin{remark}\label{r2.2}
Let $A$ be a ring and $R:=A[X]$ be the polynomial ring with the $\mathbb{Z}$-grading $R_n = AX^n$ for $n \geq 0$ and $R_n = 0$ for $n <0$. By Proposition\ref{p3.2}(3) we have that $R$ could never be a graded nil-good ring since $X$ is not nilpotent. 
\end{remark}

\begin{prop}\label{p3.3}
If $R$ is a $G$-graded nil-good ring with $U(R)=\{1\}$, then $R=R_e \cong \mathbb{Z}_2$.
\end{prop}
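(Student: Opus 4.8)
The plan is to prove that $R$ has no nonzero homogeneous nilpotent element, and then to use the graded nil-good hypothesis to squeeze every homogeneous element into $\{0,1\}\subseteq R_e$.

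First I would record the key fact: if $n$ is a homogeneous nilpotent element of $R$, say $n^k=0$, then $1+n$ is a unit of $R$ with inverse $\sum_{i=0}^{k-1}(-n)^i$. Since $U(R)=\{1\}$ by hypothesis, this forces $1+n=1$, that is $n=0$. Thus $0$ is the only homogeneous nilpotent element of $R$.

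Next I would apply the definition directly. Let $a$ be an arbitrary homogeneous element. Either $a$ is nilpotent, whence $a=0$ by the previous step, or $a=u+m$ with $u$ a homogeneous unit and $m$ a homogeneous nilpotent; then $u\in U(R)=\{1\}$ forces $u=1$, while $m=0$ by the previous step, so $a=1$. Hence every homogeneous element of $R$ equals $0$ or $1$. Since $0,1\in R_e$ and the decomposition $R=\bigoplus_{g\in G}R_g$ is direct, no $R_g$ with $g\neq e$ can contain $1$, so $R_g=\{0\}$ for all $g\neq e$. Therefore $R=R_e=\{0,1\}$, which is visibly isomorphic to $\mathbb{Z}_2$.

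The argument is very short, and the single point that needs care---the main obstacle---is that $1+n$ is a unit of the \emph{ungraded} ring and is in general not homogeneous when $\deg n\neq e$; one must remember that the hypothesis $U(R)=\{1\}$ is a statement about the full unit group, so it legitimately applies to this possibly non-homogeneous element. As a streamlined alternative I could first note $U(R_e)=U(R)=\{1\}$ and invoke Proposition~\ref{p3.2}(1) and~(3) to reduce to running the ``$1+n$ is a unit'' argument componentwise, but the direct route above treats all degrees at once and needs nothing beyond the definition.
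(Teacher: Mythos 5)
Your proof is correct, and it opens exactly as the paper does: both arguments first note that $1+\mathrm{Nil}(R)\subseteq U(R)=\{1\}$ forces every nilpotent (in particular every homogeneous nilpotent) to be zero. After that the routes diverge. The paper observes that $U(R)=\{1\}=U(R_e)$ and invokes Proposition~\ref{p3.2}(3) to conclude that each $R_g$ with $g\neq e$ consists of nilpotents, hence vanishes, so $R=R_e$; it then appeals to Proposition~\ref{p3.2}(1) and to an external result on ungraded nil-good rings (\cite[Proposition 2.3]{1}) to get $R_e\cong\mathbb{Z}_2$. You instead squeeze everything out of the definition in one pass: every homogeneous element is either nilpotent (hence $0$) or of the form $u+m$ with $u\in U(R)=\{1\}$ and $m=0$ (hence $1$), so the homogeneous part is $\{0,1\}$, which kills the components of degree $g\neq e$ (since $1\in R_e$ and the sum is direct) and identifies $R=R_e=\{0,1\}\cong\mathbb{Z}_2$ simultaneously. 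Your version is thus entirely self-contained---it needs neither Proposition~\ref{p3.2} nor the citation to Danchev's paper---at the cost of redoing in miniature what those results already provide; the paper's version is shorter given the machinery it has on hand. Your parenthetical caution is also well placed: $1+n$ need not be homogeneous when $\deg n\neq e$, but the hypothesis $U(R)=\{1\}$ concerns the full unit group, so the argument is legitimate---this is precisely the same (implicit) use the paper makes of $1+\mathrm{Nil}(R)\subseteq U(R)$.
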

\begin{proof}
Since $1+Nil(R) \subseteq U(R)$, then $Nil(R)=\{0\}$. On the other hand, since $U(R)=U(R_e)$ then by Proposition \ref{p3.2} (3), each $a\in R_g$ ($g\neq e$) is nilpotent and so $a=0$  Since $Nil(R)=0$. Hence, $R_g=0$ for all $g\neq e$ and so $R=R_e$. On the other hand, $R_e$ is nil-good by proposition \ref{p3.2}. Therefore, by \cite[Proposition 2.3]{1} we have that $R_e\cong \mathbb{Z}_2$. Which completes the proof.

\end{proof}
In \cite{1} it is proved that $R$ is a nil-good ring if and only if $R/I$ is nil-good, whenever $I$ is a nil ideal of $R$. Here we prove an analogous result in the graded nil-good case:
\begin{thm}\label{t3.1}
Let $R$ be a $G$-graded ring and $I$ a graded-nil ideal of $R$. Then $R$ is a graded nil-good ring if and only of $R/I$ is graded nil-good.  
\end{thm}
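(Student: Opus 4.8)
The plan is to prove both implications directly at the level of homogeneous elements, using the description of the quotient grading $(R/I)_g = R_g/(I\cap R_g)$ together with the fact that every homogeneous element of the graded-nil ideal $I$ is nilpotent.

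The forward implication ($R$ graded nil-good $\Rightarrow R/I$ graded nil-good) should be routine. I would take a homogeneous element $\overline{a}\in (R/I)_g$ with $a\in R_g$. If $a$ is nilpotent, so is $\overline{a}$. Otherwise I write $a=u+n$ with $u$ a homogeneous unit and $n$ a homogeneous nilpotent; by Remark \ref{r3.1} both lie in $R_g$. Passing to the quotient gives $\overline{a}=\overline{u}+\overline{n}$, where $\overline{n}$ is a homogeneous nilpotent and $\overline{u}$ is a homogeneous unit of degree $g$ (the image of a unit is a unit, and the inverse of a homogeneous unit of degree $g$ is homogeneous of degree $g^{-1}$). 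Note that this direction does not even use that $I$ is nil.

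For the converse the crucial preliminary step is a unit-lifting lemma: if $u\in R_g$ is homogeneous and its image $\overline{u}$ is a unit in $R/I$, then $u$ is already a unit in $R$. I would prove this by lifting the inverse, choosing a homogeneous $v$ of degree $g^{-1}$ mapping to $\overline{u}^{-1}$; then $uv-1$ and $vu-1$ are homogeneous elements of $I$ of degree $e$, hence nilpotent because $I$ is graded-nil, so $uv=1+i$ and $vu=1+j$ are units in $R$. It follows that $u$ has both a right and a left inverse and is therefore a unit of degree $g$.

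With this lemma in hand, take a homogeneous $a\in R_g$ and consider $\overline{a}\in (R/I)_g$. If $\overline{a}$ is nilpotent, then $a^{k}\in I$ for some $k$; since $a^{k}$ is homogeneous and $I$ is graded-nil, $a^{k}$ is nilpotent, whence $a$ is nilpotent. Otherwise $\overline{a}=\overline{u}+\overline{n}$ with $\overline{u}$ a homogeneous unit and $\overline{n}$ a homogeneous nilpotent; I lift $u$ to a homogeneous element of $R_g$, which is a unit in $R$ by the lemma. The step where the noncommutative subtlety must be handled carefully is the following: one should \emph{not} try to lift $n$ separately and add it to $u$, since a sum of two nilpotents need not be nilpotent. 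Instead I set $m:=a-u\in R_g$, whose image is exactly $\overline{n}$ and hence nilpotent, so $m^{k}\in I$ for some $k$, and the graded-nil hypothesis again forces $m$ to be nilpotent. Then $a=u+m$ exhibits $a$ as a sum of a homogeneous unit and a homogeneous nilpotent, completing the proof. The main obstacle is thus the combination of the unit-lifting lemma with this device for producing the nilpotent summand without appealing to additivity of nilpotents.
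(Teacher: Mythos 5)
Your proof is correct, but the converse direction takes a genuinely different route from the paper's. The paper lifts the \emph{nilpotent} part first: it chooses a homogeneous nilpotent lift $n$ of $\bar n$ (valid by the Case 1 argument), observes that $\overline{x-n}=\bar u$ is a unit in $R/I$, and then invokes graded Jacobson radical machinery --- a graded-nil ideal is contained in $J^g(R)$, and by \cite[Proposition 2.9.1]{14} a homogeneous element invertible modulo $J^g(R)$ is already invertible in $R$ --- to conclude that $x-n$ is a homogeneous unit. You do the mirror image: you lift the \emph{unit} part first via an elementary lemma (choose a homogeneous lift $v$ of $\bar u^{-1}$ of degree $g^{-1}$; then $uv-1$ and $vu-1$ are homogeneous elements of $I$ of degree $e$, hence nilpotent, so $uv$ and $vu$ are units and $u$ has a two-sided inverse), and then obtain the nilpotent summand as the difference $m=a-u$, whose nilpotency follows once more from the graded-nil hypothesis applied to $m^k\in I$. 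Both arguments correctly avoid lifting the two summands independently, which, as you note, would founder on the fact that a sum of two nilpotents need not be nilpotent in a noncommutative ring. What your approach buys is self-containedness: it needs no facts about $J^g(R)$ at all, only the standard observation that the inverse of a homogeneous unit of degree $g$ is homogeneous of degree $g^{-1}$ and that $1$ plus a nilpotent is a unit; in particular it sidesteps the paper's unproved assertion that graded-nil ideals lie in $J^g(R)$. What the paper's approach buys is brevity, delegating unit lifting to the cited results on graded rings.
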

\begin{proof}
If $R$ is graded nil-good, then $R/I$ is also graded nil-good as a graded homomorphic image if $R$.

Conversely, let $R/I$ be a graded nil-good ring and $x\in R_g$ where $g\in G$. We have two cases:\\
- \textbf{Case 1} : $\Bar{x}$ is nilpotent in $R/I$. Since $I$ is graded-nil, we have $x$ is also nilpotent in $R$.\\ - \textbf{Case 2} : $\Bar{x}=\Bar{u}+\Bar{n}$, where $\Bar{u}$ is homogeneous unit of $R/I$ and $n$ is nilpotent homogeneous element of $R$ (case 1). Since $I$ is graded-nil ideal, it is contained in the graded Jacobson radical $J^g(R)$. On the other hand, $\overline{x-n}$ is unit in $R/I$. Then it is also a  unit in $R/J^g(R)$ since $I\subseteq J^g(R)$. Now, by \cite[Proposition 2.9.1]{14} $x-n$ is a unit of $R$. Finally $R$ is a graded nil-good ring.

\end{proof}
The example below shows that the condition "$I$ is a graded-nil ideal" can't be removed.

\begin{ex}\label{e2.4}
Let $R:=\mathbb{Z}_2[X]$ and $I=(X^2)$. We have that $R$ is a $\mathbb{Z}$-graded ring and $I$ is not graded-nil. By Example \ref{ex3.1}, $R/I$ is graded nil-good but $R$ is not graded nil-good (by remark \ref{r2.2}).
\end{ex}

\begin{prop}\label{p3.4}
Let $R= \bigoplus_{g\in G} R_g$ be a $G$-graded nil-good ring of finite support. Then $J^g(R)$ is a graded-nil ideal.
\end{prop}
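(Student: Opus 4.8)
The plan is to prove the statement in its literal form: show that every homogeneous element of $J^g(R)$ is nilpotent, which is precisely the assertion that $J^g(R)$ is graded-nil. Since $J^g(R)$ is a homogeneous two-sided ideal, it is the direct sum of its intersections with the components, so it suffices to fix an arbitrary homogeneous element $x \in J^g(R)$, say $x \in R_g$, and prove $x \in Nil(R)$. I would argue by contradiction: assume $x$ is not nilpotent. Then, because $R$ is graded nil-good, we may write $x = u + n$ with $u$ a homogeneous unit and $n$ a homogeneous nilpotent, and by Remark \ref{r3.1} both $u$ and $n$ lie in $R_g$.

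The key step is to pass to the graded quotient $\bar{R} = R/J^g(R)$, which is again a $G$-graded ring precisely because $J^g(R)$ is a homogeneous two-sided ideal (as recalled from \cite[Proposition 2.9.1]{14}), and which is nonzero since $J^g(R)$ is a proper ideal. In $\bar{R}$ we have $\bar{x} = 0$, whence $\bar{u} = -\bar{n}$. Now $\bar{n}$ is nilpotent, being the image of a nilpotent element, so $\bar{u}$ is nilpotent as well; but $\bar{u}$ is the image of the unit $u$ and is therefore a unit of $\bar{R}$. A unit that is simultaneously nilpotent forces $\bar{R} = 0$ (if $\bar{u}^k = 0$ and $\bar{u}\bar{v} = \bar{v}\bar{u} = 1$, then $1 = \bar{u}^k\bar{v}^k = 0$), contradicting $J^g(R) \subsetneq R$. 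Hence $x$ was nilpotent after all, and $J^g(R)$ is graded-nil.

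The main obstacle I anticipate is not the short contradiction itself but verifying that each ingredient behaves correctly in the graded setting: that the inverse of a homogeneous unit is again homogeneous (so that $\bar{u}$ is genuinely invertible in the graded quotient), that images under the canonical graded surjection preserve being a unit and being nilpotent, and, most importantly, that $J^g(R)$ is a proper ideal so that $\bar{R} \neq 0$. This last point I would secure by a routine Zorn's lemma argument: the poset of proper homogeneous right ideals is nonempty (it contains $(0)$) and closed under unions of chains (the union of proper homogeneous right ideals omits $1$, hence is proper), so a graded-maximal right ideal exists, and $J^g(R)$ is contained in it.

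Finally, I would note that this reasoning is the graded counterpart of the ungraded fact, in the spirit of \cite{1}, that the Jacobson radical of a nil-good ring is nil, and that the argument as written does not actually invoke the finite-support hypothesis. For completeness I would also sketch a more computational route that does use finite support: a homogeneous $x \in R_g$ with $g$ of infinite order is automatically nilpotent, since its powers $x^k \in R_{g^k}$ eventually leave the finite support; the case of $g$ of finite order reduces to the identity component via $x^{\mathrm{ord}(g)} \in J^g(R) \cap R_e$; and the identity component is handled by combining that $R_e$ is nil-good (Proposition \ref{p3.2}(1)) with the comparison between $J^g(R) \cap R_e$ and $J(R_e)$. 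I would nonetheless keep the quotient argument as the primary proof, as it is shorter and uniform.
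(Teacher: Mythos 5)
Your proof is correct, and it takes a genuinely different --- and in fact stronger --- route than the paper's. The paper argues componentwise: it first gets $J(R_e)$ nil from Danchev's result that the Jacobson radical of a nil-good ring is nil (applied to $R_e$, which is nil-good by Proposition \ref{p3.2}), identifies $J(R_e)$ with $J^g(R)\cap R_e$ via \cite[Corollary 2.9.3]{14}, and then, for homogeneous $a\in J^g(R)\cap R_g$ with $g\neq e$, invokes the finite-support hypothesis through \cite[Corollary 2.9.4]{14} to get $J^g(R)\subseteq J(R)$, so that in a decomposition $a=u+n$ the element $n=-u(1-u^{-1}a)$ is a unit (because $u^{-1}a\in J(R)$), a contradiction. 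Your argument replaces all of this machinery by a single quotient computation: a non-nilpotent homogeneous $x\in J^g(R)$ yields $0=\bar u+\bar n$ in $R/J^g(R)$, i.e.\ a nilpotent unit, forcing $R/J^g(R)=0$ and contradicting the properness of $J^g(R)$, which you correctly secure via Zorn's lemma; the graded ingredients you check (homogeneity and two-sidedness of $J^g(R)$, preservation of units and nilpotents under the canonical surjection) are all the argument needs. What your route buys is significant: as you observe, finite support is never used, so you have actually proved the proposition for arbitrary support --- in effect a noncommutative generalization of the paper's own Lemma \ref{l3.1}, which drops finite support only in the commutative case by the same underlying principle that a proper ideal cannot meet the units. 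What the paper's route buys is the explicit comparison between graded and ungraded radicals ($J(R_e)=J^g(R)\cap R_e$ and $J^g(R)\subseteq J(R)$), which it reuses elsewhere (e.g.\ in Theorem \ref{t4.8}); but as a proof of this proposition, yours is shorter, self-contained, and more general. Your secondary sketch (powers of an element of degree of infinite order leave the finite support, plus reduction to $R_e$ for finite-order degrees) is also sound, but it genuinely needs the finite-support hypothesis and is strictly weaker than your main argument, so you are right to keep the quotient argument as primary.
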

\begin{proof}
Since $R$ is graded nil-good, then $R_e$ is a nil-good ring. According to \cite[Proposition 2.5]{1} applied to the ring $R_e$, $J(R_e)$ is nil. Now, Corollary 2.9.3 of \cite{14} implies that $J(R_e)=J^g(R)\cap R_e$. Therefore, if $a\in J^g(R)\cap R_e$ then it is nilpotent. On the other hand, if $a\in J^g(R)\cap R_g$ where $g\neq e$. Since the support of $R$ is finite then by \cite[Corollary 2.9.4]{14} $J^g(R)\subseteq J(R)$, hence $a\in J(R)$. Now, suppose that $a$ is not nilpotent. Then $a=n+u$ where $n\in Nil(R)$ and $u\in U(R)$ (since $R$ is graded nil-good). Therefore, $n=-u(1-u^{-1}a)$. Hence, $n$ is a unit of $R$ which is  a contradiction since $n$ is nilpotent. Finally, $a$ is nilpotent and so $J^g(R)$ is a graded-nil ideal.
\end{proof}
\begin{cor}\label{c3.1}
Let $R$ be a $G$-graded ring of finite support. Then $R$ is graded nil-good if and only if $J^g(R)$ is graded-nil and $R/J^g(R)$ is graded nil.
\end{cor}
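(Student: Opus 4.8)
The plan is to obtain this corollary as a straightforward packaging of the two immediately preceding results, Theorem~\ref{t3.1} and Proposition~\ref{p3.4}, by applying Theorem~\ref{t3.1} with the graded-nil ideal taken to be $J^g(R)$ itself. (I read the quotient condition as asserting that $R/J^g(R)$ is graded nil-good, since this is exactly what turns the stated equivalence into a consequence of the preceding two results; a literally graded-nil quotient by the graded-nil radical would force $R$ itself to be graded-nil, which is strictly stronger than graded nil-good.)

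For the forward implication I would assume $R$ is graded nil-good. As $R$ has finite support, Proposition~\ref{p3.4} applies and gives that $J^g(R)$ is a graded-nil ideal; this is the first of the two asserted conclusions. With $J^g(R)$ now known to be graded-nil, I would invoke Theorem~\ref{t3.1} with $I=J^g(R)$: since $R$ is graded nil-good, its graded homomorphic image $R/J^g(R)$ is graded nil-good as well, which is the second conclusion.

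For the converse I would take as hypotheses that $J^g(R)$ is graded-nil and that $R/J^g(R)$ is graded nil-good. Here the finite-support assumption is not needed: because $J^g(R)$ is assumed graded-nil, Theorem~\ref{t3.1} applies directly with $I=J^g(R)$, and graded nil-goodness of the quotient lifts back to graded nil-goodness of $R$. This closes the equivalence.

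I do not anticipate a genuine obstacle, as the content is entirely carried by Theorem~\ref{t3.1} and Proposition~\ref{p3.4}. The only point deserving attention is to track where finite support is actually used: it enters solely through Proposition~\ref{p3.4} in the forward direction, to guarantee that $J^g(R)$ is graded-nil, whereas in the converse this same property is supplied as a standing hypothesis.
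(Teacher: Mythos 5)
Your proposal is correct and follows exactly the route the paper intends for this corollary: Proposition~\ref{p3.4} (using finite support) for the forward direction, and Theorem~\ref{t3.1} with $I=J^g(R)$ for both the quotient statement and the converse. Your reading of ``graded nil'' in the statement as a typo for ``graded nil-good'' is also the right call, consistent with Corollary~\ref{c3.2} and Remark~\ref{r3.3}.
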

\begin{remark}\label{r3.3}
By Theorem \ref{t3.1}, if $J^g(R)$ is graded-nil and $R/J^g(R)$ is graded nil-good, then $R$ is graded nil-good for any cardinality of the support of $R$.
\end{remark}
\begin{lemma}\label{l3.1}
Let $R= \bigoplus_{g\in G} R_g$ be a  commutative $G$-graded nil-good ring. Then $J^g(R)$ is a graded-nil ideal.
\end{lemma}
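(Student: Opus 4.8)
The plan is to reduce the claim to a dichotomy for homogeneous elements. By definition, $J^g(R)$ is graded-nil precisely when every one of its homogeneous elements is nilpotent, so it suffices to fix an arbitrary homogeneous $a\in J^g(R)$ and show that $a$ is nilpotent. Since $R$ is commutative and graded nil-good, Proposition \ref{p3.2}(2) tells me that every homogeneous element of $R$ — in particular $a$ — is either a unit or a nilpotent. Thus the entire argument comes down to excluding the possibility that $a$ is a unit of $R$.

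To rule out the unit case, I would use that $J^g(R)$ is a proper homogeneous two-sided ideal, as recorded in the Preliminaries via \cite[Proposition 2.9.1]{14}. If $a\in J^g(R)$ were a unit of $R$, then, since $J^g(R)$ is a (left) ideal, the product $a^{-1}a=1$ would lie in $J^g(R)$, forcing $J^g(R)=R$. This contradicts the fact that $J^g(R)$, being the intersection of the graded-maximal right ideals of $R$, is contained in at least one proper ideal whenever $R\neq 0$. Hence $a$ cannot be a unit, so by the dichotomy it must be nilpotent, which is exactly what we need.

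The only genuinely delicate point is guaranteeing $J^g(R)\neq R$, which rests on the existence of a graded-maximal right ideal; this holds for any nonzero ring by a standard Zorn's lemma argument, and the trivial ring $R=0$ satisfies the conclusion vacuously. Beyond this bookkeeping I expect no real obstacle: the substantive content is carried entirely by Proposition \ref{p3.2}(2), which converts the graded nil-good hypothesis together with commutativity into the unit-or-nilpotent dichotomy, while the remainder is the familiar observation that a radical contains no units. It is worth emphasizing that this route dispenses with the finite-support hypothesis required in Proposition \ref{p3.4}, trading it for the commutativity assumption.
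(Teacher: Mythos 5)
Your proof is correct and takes essentially the same route as the paper: both invoke Proposition \ref{p3.2}(2) to reduce to the unit-or-nilpotent dichotomy for homogeneous elements, and then rule out the unit case by observing that $J^g(R)$ is a proper ideal and hence contains no units. The only difference is that you spell out the properness of $J^g(R)$ (via the existence of a graded-maximal right ideal) in more detail than the paper, which simply asserts it.
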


\begin{proof}
Since $R$ is commutative graded nil-good, then by Proposition \ref{p3.2} (2), every homogeneous element of $R$ is either nilpotent or unit. Since $J^g(R)$ is a proper ideal of $R$, then every homogeneous element of $J^g(R)$ is nilpotent. Hence, $J^g(R)$ is graded-nil.

\end{proof}
\begin{cor}\label{c3.2}
Let $R$ be a $G$-graded commutative ring. Then $R$ is graded nil-good if and only if $J^g(R)$ is graded-nil and $R/J^g(R)$ is graded nil-good.   

\end{cor}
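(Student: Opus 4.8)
The plan is to deduce this corollary by assembling three results already established in the excerpt: Lemma \ref{l3.1}, Theorem \ref{t3.1}, and the observation recorded in Remark \ref{r3.3}. Since $J^g(R)$ is a homogeneous two-sided ideal, forming the quotient $R/J^g(R)$ and speaking of its graded structure is legitimate, so both directions reduce to matching the hypotheses of Theorem \ref{t3.1}.

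For the forward implication, I would suppose $R$ is commutative and graded nil-good and begin by invoking Lemma \ref{l3.1}, which states precisely that a commutative graded nil-good ring has $J^g(R)$ graded-nil. This gives the first required conclusion and, crucially, places the ideal $I := J^g(R)$ within the scope of Theorem \ref{t3.1}. Applying that theorem with this choice of $I$, the graded nil-goodness of $R$ is equivalent to that of $R/J^g(R)$; since $R$ is graded nil-good by assumption, I obtain that $R/J^g(R)$ is graded nil-good as well. This settles both conditions at once.

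For the converse, I would suppose $J^g(R)$ is graded-nil and $R/J^g(R)$ is graded nil-good. Here commutativity plays no role: taking $I := J^g(R)$, which is graded-nil by hypothesis, Theorem \ref{t3.1} applies directly and yields that $R$ is graded nil-good from the graded nil-goodness of $R/J^g(R)$. This is exactly the content of Remark \ref{r3.3}. There is no real obstacle in this argument; the only point meriting care is that the commutativity hypothesis is genuinely needed only in the forward direction, where it enters solely through Lemma \ref{l3.1}, whereas the converse is the stronger half and holds for a $G$-graded ring of arbitrary support without any commutativity assumption.
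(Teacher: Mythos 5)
Your proof is correct and is exactly the argument the paper intends (the corollary is stated without proof, but it follows precisely by combining Lemma \ref{l3.1} with Theorem \ref{t3.1}, the converse direction being Remark \ref{r3.3}). Your observation that commutativity is needed only in the forward direction, via Lemma \ref{l3.1}, is also accurate.
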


\begin{prop}\label{p3.5}
Let $R$ be a $G$-graded-local ring of finite support. Then, $R$ is graded nil-good if and only if $J^g(R)$ is a graded-nil ideal.
\end{prop}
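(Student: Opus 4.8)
The plan is to treat the two implications separately, letting the finite-support hypothesis carry the forward direction and the graded-local hypothesis carry the converse.

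For the forward implication there is essentially nothing new to do: if $R$ is graded nil-good and has finite support, then $J^g(R)$ is a graded-nil ideal immediately by Proposition \ref{p3.4}. So here I would simply invoke that result.

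The converse is where the graded-local assumption enters. First I would record the structural fact that, since $R$ is graded-local, its unique graded-maximal right ideal coincides with $J^g(R)$, so in the quotient $R/J^g(R)$ the only graded-maximal right ideal is $\{\overline{0}\}$; equivalently, $R/J^g(R)$ has no proper nonzero homogeneous right ideal and is therefore a graded division ring, i.e. every nonzero homogeneous element of $R/J^g(R)$ is invertible. Granting this, every homogeneous element of $R/J^g(R)$ is either $\overline{0}$ (hence nilpotent) or a homogeneous unit, so $R/J^g(R)$ is trivially graded nil-good. Since $J^g(R)$ is assumed graded-nil, it is a graded-nil ideal, and Theorem \ref{t3.1} applied with $I=J^g(R)$ then yields that $R$ is graded nil-good.

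The main obstacle is justifying that $R/J^g(R)$ is a graded division ring for a graded-local $R$; this is the graded counterpart of the classical statement that a local ring modulo its Jacobson radical is a division ring, and I would either cite it from the graded-ring reference \cite{14} or reprove it directly by noting that for any nonzero homogeneous coset $\overline{a}$ the right ideal $\overline{a}\,(R/J^g(R))$ is a nonzero homogeneous right ideal, hence all of $R/J^g(R)$, which forces $\overline{a}$ to be right-invertible and then invertible. I would also remark that finite support is used only in the forward direction, through Proposition \ref{p3.4}, while the converse holds regardless of the cardinality of the support of $R$.
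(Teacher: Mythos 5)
Your proof is correct, but the sufficiency direction takes a genuinely different route from the paper's. The paper argues directly in $R$: since $R$ is graded-local, every homogeneous element $a$ satisfies $a\in U(R)$ or $a\in J^g(R)$ (the unique graded-maximal right ideal), and in the latter case $a$ is nilpotent because $J^g(R)$ is graded-nil; thus every homogeneous element is a unit or nilpotent, with no appeal to Theorem \ref{t3.1}. You instead pass to the quotient: you show that $R/J^g(R)$ is a graded division ring, observe that it is then trivially graded nil-good, and lift the property back along Theorem \ref{t3.1} using the hypothesis that $J^g(R)$ is graded-nil --- which is precisely the recipe recorded in Remark \ref{r3.3}. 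Both arguments rest on the same structural fact about graded-local rings, viewed on opposite sides of the quotient map, and your sketch of the graded-division-ring fact (a nonzero homogeneous coset generates a nonzero homogeneous right ideal, hence everything, giving right invertibility and then invertibility via a homogeneous right inverse) is sound, as is your remark that finite support is used only for necessity through Proposition \ref{p3.4}. What the paper's version buys is economy: it avoids both the unit-lifting machinery hidden in Theorem \ref{t3.1} (via \cite[Proposition 2.9.1]{14}) and the quotient-is-graded-division-ring lemma that you rightly identify as the main thing needing proof or citation. What your version buys is a cleaner conceptual structure: the ``unit or nilpotent'' dichotomy becomes immediate in the quotient, and the argument aligns with the pattern of Corollaries \ref{c3.1} and \ref{c3.2}.
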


\begin{proof}
The necessity follows directly from proposition \ref{p3.4}. To prove the sufficiency, given a homogeneous element $a$, we have that $a\in U(R)$ or $a\in J^g(R)$. In the second case, $a$ will be nilpotent since $J^g(R)$ is graded-nil. Hence, every homogeneous element is either nilpotent or unit. Thus, $R$ is graded nil-good.

\end{proof}

\begin{remark}
Let's return to Proposition \ref{p3.2}. If $R=\bigoplus_{g\in G}R_g$ is a $G$-graded nil-good ring, then $R_e$ is a nil-good ring. Now, this yields to the question of when the following implication holds true:
$$ R_e ~\text{is nil-good}\Longrightarrow R= \bigoplus_{g\in G}R_g~\text{is graded nil-good}.$$
The following example shows that the above  implication  does not hold in general.
\end{remark}

\begin{ex}\label{e4.2}
Let $R:=\mathbb{Z}_2[X]$. $R$ is $\mathbb{Z}$-graded with $R_i=\mathbb{Z}_2X^i$ if $i\geq 0 $ and $R_i=0$ if $i< 0$. Then $R_0=\mathbb{Z}_2$ is a nil-good ring (see \cite[Example 1]{1}), but $R$ is not graded nil-good since $X$ is nilpotent.
\end{ex}
Next we give some sufficient conditions for the above implication to be true. Let us first recall the definition of $PI$-ring.
\begin{defo}[\cite{23}]
A ring $R$ is a $PI$-ring if there is, for some natural integer $n$ an element $P$ of $\mathbb{Z}[X_1,\dots,X_n]$ such that for all $(r_1,\dots,r_n)\in R^n$ we have that $P(r_1,\dots,r_n)=0$.

\end{defo}

\begin{thm}\label{t4.5}
Let $R= \bigoplus_{g\in G}R_g$ be a $G$-graded $PI$-ring without unity which is a Jacobson radical (i.e $J(R)=R$). If $R_e$ is  nil-good, then $R$ is graded nil-good.
\end{thm}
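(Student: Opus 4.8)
The plan is to prove the stronger statement that every homogeneous element of $R$ is nilpotent, i.e. that $R$ is graded-nil, from which graded nil-goodness is immediate. The first move is a reduction. Since $R$ has no identity element it has no units at all, so $U(R)=\varnothing$ and no homogeneous element can be written as a homogeneous unit plus a homogeneous nilpotent; hence a homogeneous element is graded nil-good precisely when it is nilpotent, and ``$R$ is graded nil-good'' is equivalent to ``every $R_g$ consists of nilpotent elements''. This is the statement I would actually establish.

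The second move records where the hypotheses enter. Because $R_e\subseteq R=J(R)$, every element of $R_e$ is quasi-regular in $R$; writing the circle operation $y\circ x=y+x-yx$ and projecting a quasi-inverse relation onto the degree-$e$ component shows that an element of $R_e\cap J(R)$ is already quasi-regular inside $R_e$, so that $R_e\cap J(R)\subseteq J(R_e)$ and therefore $J(R_e)=R_e$. A nonzero ring equal to its own Jacobson radical has no identity, hence no units, so the hypothesis that $R_e$ is nil-good forces every element of $R_e$ to be nilpotent; thus $R_e$ is nil. The core of the argument is then the passage from the radical PI structure of $R$ to nilpotence: since $R$ satisfies a polynomial identity and $J(R)=R$, it is a radical PI-ring, and the key input is the known fact that a radical PI-ring is nil. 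Granting this, $R$ is nil, every homogeneous element is nilpotent, and by the reduction of the first step $R$ is graded nil-good. Consistently, the nilness of $R$ recovers the nilness of $R_e$ obtained above, so the nil-good hypothesis on $R_e$ is in the end subsumed by the remaining hypotheses.

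The step I expect to be the main obstacle is justifying that a radical PI-ring is nil at the level of generality stated here, namely a non-unital ring satisfying an identity with integer coefficients: the cleanest forms of this result are usually phrased for algebras over a field (for instance via Amitsur's theorem on the Jacobson radical of a PI-algebra, or via the structure theory of semiprime PI-rings), and one must take care that no unit is quietly assumed. A secondary technical point is the containment $R_e\cap J(R)\subseteq J(R_e)$, which the standard references such as \cite{14} prove under the presence of an identity; I would therefore verify it by hand through the degree-$e$ projection of the circle operation, as indicated above, rather than by citation.
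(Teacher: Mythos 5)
There is a genuine gap, and it sits exactly at the step you yourself flagged as the main obstacle: the claim that a radical PI-ring is nil is false. The ring $R=xK[[x]]$ of formal power series with zero constant term over a field $K$ is commutative (hence PI) and coincides with its own Jacobson radical — every $f\in R$ has quasi-inverse $-f(1-f)^{-1}\in R$ — yet it is not nil, since $x$ is not nilpotent; $p\mathbb{Z}_{(p)}$ is another such example. So the passage from ``$J(R)=R$ and $R$ satisfies a polynomial identity'' to ``$R$ is nil'' cannot be made at this level of generality; the difficulty is not, as you suggest, a matter of integer versus field coefficients or of hidden unital assumptions. The same example also refutes your closing remark that the nil-good hypothesis on $R_e$ is subsumed by the remaining hypotheses: $xK[[x]]$ with the trivial grading satisfies every hypothesis of the theorem except that $R_e$ is nil-good, and it is not graded nil-good. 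The hypothesis on $R_e$ is doing essential work and cannot be discarded.

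The paper's proof keeps the graded structure alive at precisely this point, which is what saves it. From the hypothesis on $R_e$ it gets that $J(R_e)$ is nil (via \cite[Proposition 2.5]{1}; your step 2, with the correctly justified containment $R_e\cap J(R)\subseteq J(R_e)$, gives even more, namely that $J(R_e)=R_e$ is nil), and then invokes the theorem of Kelarev and Okninski on group-graded PI-rings (\cite[Theorem 3]{8}), which upgrades nilness of $J(R_e)$ to nilness of $J(R)$ for a $G$-graded PI-ring. Since $J(R)=R$, the ring is nil, and your step 1 (graded nil-good $\Leftrightarrow$ graded-nil in the absence of a unity, which the paper uses implicitly as well) finishes the argument. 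Your steps 1 and 2 are sound, and your proof becomes correct if the false general claim is replaced by the citation of the graded PI result; as written, however, the core step fails.
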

\begin{proof}
According to \cite[Proposition 2.5]{1}, $J(R_e)$ is nil. Now, by \cite[Theorem 3]{8} we have  that $J(R)$ is nil since $R$ is by assumption $PI$-ring. On the other hand, $R$ is by assumption a Jacobson radical ring. Hence, $R=J(R)$ is a nil ring. In particular, every homogeneous element is nilpotent. Therefore, $R$ is graded nil-good.

\end{proof}

\begin{thm}\label{t4.6}
Let $R= \bigoplus_{g\in G}R_g$ be a $G$-graded $PI$-ring which is graded-local, and let $G$ be a finite group such that the order of $G$ is a unit in $R$. Assume that $R_gR_{g^{-1}}=0$ for every $g\in G \backslash\{e\}$. Then, if $R_e$ is nil-good, $R$ is graded nil-good.
\end{thm}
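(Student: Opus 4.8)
The plan is to reduce everything to a statement about the graded Jacobson radical and then exploit the hypothesis $R_gR_{g^{-1}}=0$ to control the non-identity components. Since $G$ is finite, $R$ has finite support, so Proposition \ref{p3.5} applies: being graded-local of finite support, $R$ is graded nil-good if and only if $J^g(R)$ is graded-nil. Thus the whole problem becomes showing that every homogeneous element of $J^g(R)$ is nilpotent. Because $R_e$ is nil-good, \cite[Proposition 2.5]{1} gives that $J(R_e)$ is nil, and by \cite[Corollary 2.9.3]{14} we have $J^g(R)\cap R_e=J(R_e)$; hence every homogeneous element of $J^g(R)$ lying in $R_e$ is already nilpotent.

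It remains to treat a homogeneous element $a\in R_g$ with $g\neq e$. First I would show that such an $a$ cannot be a homogeneous unit: the inverse of a homogeneous unit of degree $g$ is homogeneous of degree $g^{-1}$, so if $a$ were a unit then $1=aa^{-1}\in R_gR_{g^{-1}}=0$, a contradiction. Because $R$ is graded-local, a homogeneous non-unit must lie in the unique graded-maximal right ideal, i.e. $a\in J^g(R)$; consequently $R_g\subseteq J^g(R)$ for every $g\neq e$. Now, writing $k$ for the order of $g$ in the finite group $G$, we have $a^k\in R_{g^k}=R_e$, and since $J^g(R)$ is a two-sided ideal (see \cite[Proposition 2.9.1]{14}) also $a^k\in J^g(R)$. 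Therefore $a^k\in J^g(R)\cap R_e=J(R_e)$, which is nil, so $a^k$ and hence $a$ is nilpotent. Combining the two cases shows $J^g(R)$ is graded-nil, and Proposition \ref{p3.5} finishes the argument.

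An alternative route, parallel to the proof of Theorem \ref{t4.5} and probably the intended use of the $PI$ and invertibility hypotheses, is to pass through the ordinary radical: from $J(R_e)$ nil together with the $PI$ assumption, \cite[Theorem 3]{8} yields that $J(R)$ is nil, while finite support gives $J^g(R)\subseteq J(R)$ by \cite[Corollary 2.9.4]{14}; this makes $J^g(R)$ graded-nil in one stroke, and Proposition \ref{p3.5} again concludes. The delicate point in either approach is the control of the off-identity components: in the first route it is the observation, powered by $R_gR_{g^{-1}}=0$ together with graded-locality, that $R_g\subseteq J^g(R)$ and that a suitable power of each such element falls into $J(R_e)$; in the second route the main obstacle is justifying that the $PI$ condition (with $|G|$ a unit, via \cite[Theorem 3]{8}) propagates nilness of $J(R_e)$ to nilness of $J(R)$. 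I would present the first, self-contained route as the primary argument, since it uses only the structural hypotheses together with the already-established Proposition \ref{p3.5}.
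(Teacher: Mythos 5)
Your proof is correct, but it follows a genuinely different route from the paper's. The paper never invokes Proposition \ref{p3.5}. Instead, after obtaining that $J(R_e)$ is nil via \cite[Proposition 2.5]{1}, it applies Cohen--Montgomery \cite[Theorem 4.4]{3} --- this, not \cite[Theorem 3]{8}, is where finiteness of $G$ and invertibility of $|G|$ are used --- to get $J^g(R)=J(R)$, then \cite[Theorem 3]{8} (the $PI$ hypothesis) to conclude that $J(R)$, hence $J^g(R)$, is nil; graded-locality and the hypothesis $R_gR_{g^{-1}}=0$ enter only at the end, through the proof of \cite[Theorem 3.27]{15}, which identifies the homogeneous part of $R/J^g(R)$ with the nil-good ring $R_e/J(R_e)$, so that $R/J^g(R)$ is graded nil-good and Theorem \ref{t3.1} concludes. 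You replace that entire second half by the reduction through Proposition \ref{p3.5} (legitimate, since $G$ finite gives finite support and graded-locality is assumed), and your primary route also replaces the first half: you obtain graded-nilness of $J^g(R)$ by hand, using $R_gR_{g^{-1}}=0$ to rule out homogeneous units of degree $g\neq e$, graded-locality to force $R_g\subseteq J^g(R)$, and the finite order of $g$ to push a power of each such element into $J^g(R)\cap R_e=J(R_e)$, which is nil. The trade-off is real: your primary route uses neither the $PI$ hypothesis nor the invertibility of $|G|$, and your secondary route uses neither $R_gR_{g^{-1}}=0$ nor the invertibility of $|G|$, so each actually proves a formally stronger statement than the theorem as stated; the paper's longer proof, on the other hand, yields stronger intermediate information ($J^g(R)=J(R)$ and an explicit description of the quotient $R/J^g(R)$). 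Your appeal to the dichotomy ``a homogeneous element of a graded-local ring is a unit or lies in $J^g(R)$'' is on the same footing as the paper itself, which asserts exactly this, without proof, inside Proposition \ref{p3.5}. Finally, your primary route can be streamlined further: for $a\in R_g$ with $g\neq e$ of order $k$ one has $a^{k-1}\in R_{g^{k-1}}=R_{g^{-1}}$, hence $a^k=a\cdot a^{k-1}\in R_gR_{g^{-1}}=0$, so every homogeneous element of degree $\neq e$ is nilpotent outright, with no appeal to graded-locality or to radical theory at all; combined with nil-goodness of $R_e$ this shows the conclusion really only needs ``$G$ torsion, $R_gR_{g^{-1}}=0$, and $R_e$ nil-good.''
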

\begin{proof}
According to \cite[Proposition 2.5]{1} we have that $J(R_e)$ is nil. Moreover, by assumption $R_e/J(R_e)$ is nil-good. Now, according to \cite[Theorem 4.4]{3} we have that $J^g(R)=J(R)$. On the other hand, \cite[Theorem 3]{8} implies that $J(R)$ is nil, and hence $J^g(R)$ is graded-nil. Let $H$ be the homogeneous part of $R/J^g(R)$. According to the proof of Theorem 3.27 in \cite{15}, we have that $H\cong R_e/J(R_e)$. This means that every homogeneous element of $R/J^g(R)$ is graded nil-good. Hence, $R/J^g(R)$ is graded nil-good. Finally, according to Theorem \ref{t3.1}, $R$ is graded nil-good.

\end{proof}
\begin{thm}\label{4.7}

Let $R= \bigoplus_{g\in G}R_g$ be a $G$-graded ring of finite support where $G$ is a torsion free group, such that $R$ is a semiprimary ring with $R_e$ local nil-good. Then, $R$ is graded nil-good.
\end{thm}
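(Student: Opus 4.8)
The plan is to verify the defining property directly on homogeneous elements, separating the identity component $R_e$ from the components $R_g$ with $g\neq e$. The organizing observation, which is really the heart of the matter, is that finite support together with torsion-freeness of $G$ forces every homogeneous element of nonidentity degree to be nilpotent; once this is in hand, the only remaining input is the behaviour of $R_e$, and the hypothesis that $R_e$ is nil-good finishes the job. So first I would isolate and prove this degree observation, and only afterward invoke the assumption on $R_e$.

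For the key step, let $a\in R_g$ with $g\neq e$. Since $a^n\in R_{g^n}$ for every $n\geq 1$ and $G$ is torsion-free, the elements $g,g^2,g^3,\dots$ are pairwise distinct, so only finitely many of them can lie in the (finite) support of $R$; hence $R_{g^n}=0$, and thus $a^n=0$, for all large $n$. Therefore every homogeneous element of degree $g\neq e$ is nilpotent, and in particular graded nil-good. For the identity component, I would first recall that the identity $1$ of $R$ lies in $R_e$, so that $R_e$ is a subring of $R$ sharing the same identity; consequently every unit of $R_e$ is a unit of $R$ and every nilpotent of $R_e$ is nilpotent in $R$, both homogeneous of degree $e$. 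Given $x\in R_e$, the nil-good property of $R_e$ writes $x$ either as a nilpotent of $R_e$ or as $u+n$ with $u\in U(R_e)$ and $n\in Nil(R_e)$; transporting this to $R$ exhibits $x$ as graded nil-good. Combining the two cases shows that every homogeneous element of $R$ is graded nil-good.

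The only real point of substance is the power computation above, and it is exactly where torsion-freeness and finiteness of the support are consumed; everything else is bookkeeping. To remain closer to the machinery developed earlier, one may instead route the argument through Theorem~\ref{t3.1}: since $R$ is semiprimary, $J(R)$ is nilpotent, and finite support gives $J^g(R)\subseteq J(R)$ by \cite[Corollary 2.9.4]{14}, so $J^g(R)$ is a graded-nil ideal; in the quotient $\overline{R}=R/J^g(R)$ one has $\overline{R}_e=R_e/(J^g(R)\cap R_e)=R_e/J(R_e)$ by \cite[Corollary 2.9.3]{14}, which is a division ring because $R_e$ is local, and the same torsion-free plus finite-support argument makes every homogeneous element of $\overline{R}$ of nonidentity degree nilpotent, so $\overline{R}$ is graded nil-good; Theorem~\ref{t3.1} then lifts this to $R$. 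It is worth flagging that the direct argument uses only that $R_e$ is nil-good together with torsion-freeness and finite support, so the semiprimary and locality hypotheses are not strictly necessary for the stated conclusion.
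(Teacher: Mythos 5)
Your proof is correct, and it takes a genuinely different route from the paper's — in fact a more elementary and more general one. The paper never isolates your key observation; instead it quotes Danchev's result that $R_e$ nil-good implies $R_e/J(R_e)$ nil-good \cite[Proposition 2.8]{1}, then invokes the structure theorem \cite[Proposition 9.6.4]{14} (this is exactly where torsion-freeness, finite support, semiprimarity and locality of $R_e$ are consumed) to identify $J^g(R)=J(R)$ and $R/J(R)=R_e/J(R_e)$, and finally lifts graded nil-goodness along Theorem \ref{t3.1}, using that $J(R)$ is nil because $R$ is semiprimary. Your secondary route (via Corollaries 2.9.3 and 2.9.4 of \cite{14} and Theorem \ref{t3.1}) is essentially a hand-made version of that argument; your primary argument, by contrast, verifies the definition directly: torsion-freeness makes the powers $g,g^2,g^3,\dots$ pairwise distinct, so finite support forces $a^N\in R_{g^N}=0$ for any $a\in R_g$ with $g\neq e$, and the degree-$e$ component is handled by nil-goodness of $R_e$ alone, using the standard fact $1\in R_e$ (which you rightly flag), so that $U(R_e)\subseteq U(R)$ and $Nil(R_e)\subseteq Nil(R)$ with all elements homogeneous of degree $e$. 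What your approach buys is a strictly stronger theorem: as you observe, the semiprimary and locality hypotheses are superfluous, since finite support, torsion-freeness of $G$, and nil-goodness of $R_e$ already suffice (note this does not contradict the paper's Example \ref{e4.2}, where the support is infinite). What the paper's approach buys is uniformity with the rest of the section — everything is funneled through the graded Jacobson radical and Theorem \ref{t3.1} — but at the cost of extra hypotheses and reliance on a nontrivial external structure theorem.
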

\begin{proof}
Since $R_e$ is nil-good, by \cite[Proposition 2.8]{1} we have that $R_e/J(R_e)$ is nil-good too. By \cite[Proposition 9.6.4]{14} we have that $J^g(R)=J(R)$ and that $R/J(R)=R_e/J(R_e)$. Therefore, $R/J(R)$ is graded nil-good. On the other hand, $R$ is a semiprimary ring. Hence, $J(R)$ is nil. According to Theorem \ref{t3.1}, $R$ is graded nil-good.

\end{proof}

\section{Extensions of graded nil-good rings}
\subsection{Group rings and trivial ring extensions}
~~~~~In this subsection we investigate graded nil-good property of graded group rings and trivial ring extensions. Also, we establish some sufficient conditions for a group ring to be graded nil-good.
\begin{thm}\label{t4.1}
Let $A$ be a $G$-graded ring,  $E$ be a $G$-graded $A$-module, and let 
$R:=A\propto E$ be the trivial ring extension of $A$ by $E$. Then, $A$ is graded nil-good if and only if so is $R$.
\end{thm}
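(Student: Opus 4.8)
The plan is to reduce everything to two ring-theoretic facts already recorded in the preliminaries: that $U(R)=U(A)\propto E$ and $Nil(R)=Nil(A)\propto E$, together with the description of the grading $R_g=A_g\oplus E_g$. The key observation is that a homogeneous element of $R$ of degree $g$ is precisely a pair $(a,\epsilon)$ with $a\in A_g$ and $\epsilon\in E_g$, and that under the two characterizations above, whether $(a,\epsilon)$ is a unit or a nilpotent of $R$ depends only on its $A$-component $a$. Thus the grading is transparently compatible with the decomposition into ``$A$-part'' and ``$E$-part'', and the proof becomes a matter of transporting homogeneous units and nilpotents of $A$ into $R$ and back.

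For the forward direction, I would assume $A$ is graded nil-good and take a homogeneous element $(a,\epsilon)\in R_g$. Applying the graded nil-good hypothesis to $a\in A_g$, either $a$ is nilpotent---in which case $(a,\epsilon)\in Nil(A)\propto E=Nil(R)$ is nilpotent---or $a=u+n$ with $u$ a homogeneous unit and $n$ a homogeneous nilpotent of $A$. By Remark \ref{r3.1}, $u$ and $n$ both lie in $A_g$. I would then write $(a,\epsilon)=(u,\epsilon)+(n,0)$: since $u\in U(A)$, the element $(u,\epsilon)\in A_g\oplus E_g=R_g$ is a homogeneous unit of $R$, and since $n\in Nil(A)$, the element $(n,0)\in R_g$ is a homogeneous nilpotent. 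Hence $(a,\epsilon)$ is graded nil-good, and $R$ is graded nil-good.

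For the converse, I would assume $R$ is graded nil-good and take a homogeneous element $a\in A_g$. Consider $(a,0)\in R_g$. If $(a,0)$ is nilpotent then $a\in Nil(A)$ and we are done; otherwise $(a,0)=(u,\epsilon)+(n,f)$ where $(u,\epsilon)$ is a homogeneous unit and $(n,f)$ a homogeneous nilpotent of $R$. By Remark \ref{r3.1} all three elements have degree $g$, so comparing the $A$- and $E$-components yields $a=u+n$ and $f=-\epsilon$. The characterizations $U(R)=U(A)\propto E$ and $Nil(R)=Nil(A)\propto E$ force $u\in U(A)$ and $n\in Nil(A)$, both homogeneous of degree $g$; therefore $a=u+n$ exhibits $a$ as graded nil-good.

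I do not expect a serious obstacle here: both directions follow formally once the unit and nilpotent characterizations are in hand. The only point demanding care is the bookkeeping of degrees---ensuring that the unit and nilpotent summands produced are genuinely homogeneous of the correct degree---which is exactly what Remark \ref{r3.1} and the direct-sum description $R_g=A_g\oplus E_g$ guarantee. I would invoke Remark \ref{r3.1} explicitly in each case so that the homogeneity claims are not left implicit.
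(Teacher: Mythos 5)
Your proposal is correct, and its forward direction ($A$ graded nil-good $\Rightarrow$ $R$ graded nil-good) coincides with the paper's: both decompose a homogeneous $(a,\epsilon)$ as $(u,\epsilon)+(n,0)$ using $U(R)=U(A)\propto E$ and $Nil(R)=Nil(A)\propto E$; you are merely more explicit about invoking Remark \ref{r3.1} and $R_g=A_g\oplus E_g$ to justify homogeneity, which the paper leaves implicit. Where you genuinely diverge is the converse. The paper dispatches it in one line: by the graded isomorphism $A\cong R/(0\propto E)$, the ring $A$ is a graded homomorphic image of $R$, and graded nil-goodness passes to graded homomorphic images (the same closure principle used in the proof of Theorem \ref{t3.1}). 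You instead argue element-wise: embed $a\in A_g$ as $(a,0)\in R_g$, apply the graded nil-good hypothesis in $R$, use Remark \ref{r3.1} to force all degrees to agree, and project back to the $A$-component via the unit and nilpotent characterizations. Your route is more elementary and self-contained --- it avoids having to verify that the quotient isomorphism is degree-preserving and that surjective graded morphisms carry homogeneous elements onto homogeneous elements of the same degree --- at the cost of a slightly longer computation; the paper's route is shorter and reuses a general principle that serves it elsewhere. Both arguments are sound, so this is a matter of economy rather than correctness.
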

\begin{proof}
$\Longleftarrow)$ Assume that $R$ is graded nil-good. By \cite[Theorem 3.1]{17} we have that $A\cong \frac{R}{0\propto E}$. Hence, $A$ is graded nil-good as a graded homomorphic image of $R$.\\$\Longrightarrow)$ Assume that $A$ is graded nil-good. By \cite[Theorem 3.2]{17} we have that $Nil(R)=Nil(A)\propto E$. And according to \cite[Theorem 3.7]{17} we have that $U(R)=U(A)\propto E$. Now, let $(a,e)$ be a homogeneous element of $R$. Since $A$ is graded nil-good, then either $a\in Nil(A)$ or $a=u+n$ where $u\in U(A)$ and $n\in Nil(A)$. If $a\in Nil(A)$ then $(a,e) \in Nil(R)$. In the second case we have that $(a,e)=(u,e)+(n,0)$. Since $(u,e)\in U(R)$ and $(n,0)\in Nil(R)$ then  $(a,e)$ is graded nil-good. Finally, $R$ is graded nil-good.

\end{proof}
\begin{thm}\label{t4.2}
Let $A$ be a ring,  $E$ be an $A$-module, and let $R:=A\propto E$ be the trivial ring extension of $A$ by $E$. Then, $R$ is nil-good if and only if so is $A$.

\end{thm}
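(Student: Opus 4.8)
The plan is to mirror the argument of Theorem \ref{t4.1}, but now in the ungraded setting, relying on the two structural identities recorded in the preliminaries for an arbitrary (possibly noncommutative) base ring $A$: namely $U(R)=U(A)\propto E$ and $Nil(R)=Nil(A)\propto E$, together with the isomorphism $A\cong R/(0\propto E)$ realized by the first-coordinate projection $(b,z)\mapsto b$.

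For the implication ($\Leftarrow$), I would assume $R$ is nil-good and observe that $A$ is a homomorphic image of $R$ via the unital ring epimorphism $\pi:(b,z)\mapsto b$. Given $a\in A$, lift it to $(a,0)\in R$; nil-goodness of $R$ gives that $(a,0)$ is either nilpotent or a sum $(u,x)+(n,y)$ of a unit and a nilpotent. Applying $\pi$ and using that a unital ring homomorphism carries units to units and nilpotents to nilpotents, I obtain that $a$ is either nilpotent or of the form $u+n$ with $u\in U(A)$ and $n\in Nil(A)$. Hence $A$ is nil-good.

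For the implication ($\Rightarrow$), I would assume $A$ is nil-good and take an arbitrary element $(a,x)\in R$. Applying nil-goodness of $A$ to the first coordinate $a$ yields two cases. If $a\in Nil(A)$, then $(a,x)\in Nil(A)\propto E=Nil(R)$, so $(a,x)$ is nilpotent. Otherwise $a=u+n$ with $u\in U(A)$ and $n\in Nil(A)$, and I write $(a,x)=(u,x)+(n,0)$; the identity $U(R)=U(A)\propto E$ forces $(u,x)\in U(R)$ while $Nil(R)=Nil(A)\propto E$ forces $(n,0)\in Nil(R)$, so $(a,x)$ is a sum of a unit and a nilpotent. In either case $(a,x)$ is nil-good, so $R$ is nil-good.

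I do not expect any genuine obstacle here. The only points requiring care are, first, the ($\Leftarrow$) direction's routine observation that nil-goodness descends along unital ring homomorphisms, and second, the legitimacy of absorbing the entire module component $x$ into the unit term $(u,x)$ in the forward direction, which is exactly what $U(R)=U(A)\propto E$ guarantees. Since there is no grading, no degree-comparison step is needed, making this argument strictly simpler than that of Theorem \ref{t4.1}.
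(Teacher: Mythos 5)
Your proposal is correct and follows essentially the same route as the paper: the paper's proof of this theorem simply reads ``The same as the proof of Theorem \ref{t4.1},'' and your argument is exactly that proof transposed to the ungraded setting --- the quotient $A\cong R/(0\propto E)$ (equivalently, descent of nil-goodness along the unital projection) for one direction, and the identities $U(R)=U(A)\propto E$ and $Nil(R)=Nil(A)\propto E$ with the decomposition $(a,x)=(u,x)+(n,0)$ for the other. No gaps; your explicit verification that nil-goodness passes along unital homomorphisms is just a spelled-out version of the paper's ``homomorphic image'' remark.
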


\begin{proof}
The same as the proof of Theorem \ref{t4.1}.

\end{proof}
Now, we are able to give a class of rings which are graded nil-good but not nil-good. 

\begin{ex}\label{e4.1}
Let $A$ be a graded nil-good ring that is not nil-good (e.g. Example \ref{e3.2}). Let $E$ be any graded $A$-module, and let $R:=A \propto E$. According to Theorem \ref{t4.1} and Theorem \ref{t4.2} $R$ is a graded nil-good ring but not a nil-good ring.
\end{ex}

Next we deal with the graded nil-good property of graded group rings. We recall that if $G$ is a group and $H$ is a normal subgroup of $G$ then a
$G$-graded ring $R= \bigoplus_{g\in G} R_g$ can be viewed as $G/H$-graded ring $R= \bigoplus_{C\in G/H} R_C$ where $R_C= \bigoplus_{x\in C} R_x$ (see for instance \cite{9,14}).

\begin{thm}\label{t4.3}

Let $R$ be a $G$-graded  ring where $G$ is a locally finite $p$-group, and let $H$ be a normal subgroup of $G$. Assume that $p$ is nilpotent in $R$. If $R$ is graded nil-good as a $G/H$-graded ring then $R[H]$ is  graded nil-good as a $G/H$-graded ring.
\end{thm}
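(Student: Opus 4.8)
The plan is to present $R[H]$ as a quotient realizing $R$ and then apply Theorem \ref{t3.1}. First I would pin down the $G/H$-grading on $R[H]$: writing $R=\bigoplus_{C\in G/H}R_C$ with $R_C=\bigoplus_{x\in C}R_x$, the group ring decomposes as $R[H]=\bigoplus_{C\in G/H}R_C[H]$, so $(R[H])_C=R_C[H]$ is the $G/H$-grading. The augmentation map $\varepsilon\colon R[H]\to R$, $\sum_{h\in H}r_hh\mapsto\sum_{h\in H}r_h$, is a surjective ring homomorphism that sends $R_C[H]$ onto $R_C$, hence is degree-preserving for this grading; its kernel, the augmentation ideal $\omega:=\ker\varepsilon$, is therefore a homogeneous two-sided ideal and $R[H]/\omega\cong R$ as $G/H$-graded rings. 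Since $R$ is graded nil-good as a $G/H$-graded ring by hypothesis, $R[H]/\omega$ is graded nil-good, and so by Theorem \ref{t3.1} it suffices to show that $\omega$ is a graded-nil ideal.

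To prove $\omega$ is graded-nil, I would take a homogeneous $x\in\omega$, say $x\in R_C[H]$ with $\varepsilon(x)=0$, and localize the computation to a finite $p$-group. Let $K\le G$ be the subgroup generated by the (finite) support of $x$ together with the finitely many $G$-homogeneous degrees occurring in its coefficients; by local finiteness $K$ is finite, and it is a $p$-group because $G$ is. Then $R_{[K]}:=\bigoplus_{\gamma\in K}R_\gamma$ is a subring of $R$ containing all coefficients of $x$, and $H_0:=K\cap H$ is a finite $p$-group. Crucially, because $H$ is normal the conjugation $\gamma^{-1}u\gamma$ for $\gamma\in K$, $u\in H_0$ again lies in $K\cap H=H_0$, so the twisted group ring $R_{[K]}[H_0]$ is closed under multiplication, i.e. it is a subring of $R[H]$ containing $x$, with $x$ in its augmentation ideal. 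Thus the matter reduces to the claim: for a finite $p$-group $H_0$ and a ring $S=R_{[K]}$ in which $p$ is nilpotent, the augmentation ideal of the (twisted) group ring $S[H_0]$ is nilpotent.

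For this claim I would reduce modulo $p$. Since $p^s=0$ in $S$ and $p=p\cdot 1$ is central, $pS[H_0]$ is a nilpotent ideal with $(pS[H_0])^s=p^sS[H_0]=0$, and $S[H_0]/pS[H_0]\cong(S/pS)[H_0]$ with $S/pS$ an $\mathbb{F}_p$-algebra. Over such coefficients the augmentation ideal of the group ring of a finite $p$-group is nilpotent, via the Frobenius identity $(h-1)^{p^k}=h^{p^k}-1=0$ for $p^k$ exceeding the exponent of $H_0$, together with an induction on $|H_0|$ through a central subgroup of order $p$. Pulling back, $\omega(S[H_0])^N\subseteq pS[H_0]$ for some $N$, whence $\omega(S[H_0])^{Ns}\subseteq(pS[H_0])^s=0$. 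Therefore $x$ is nilpotent, $\omega$ is graded-nil, and Theorem \ref{t3.1} yields that $R[H]$ is graded nil-good.

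The step I expect to be the \emph{main obstacle} is the nilpotency of the augmentation ideal in the presence of the twisted (crossed-product) multiplication carried by the grading on the group ring. Two points need care: the twist forces one to restrict coefficient degrees to the subgroup $K$ and to use normality of $H$, since $R[H_0]$ with arbitrary coefficients is not closed under the twisted product; and once inside $S[H_0]$ the clean Frobenius identity $(h-1)^{p^k}=h^{p^k}-1$ must be re-examined, as conjugation enters the binomial expansion whenever coefficients sit outside $R_e$. I would resolve this either by running the induction through a $p$-subgroup compatible with the conjugation action or by invoking the corresponding reduction in the proof of the graded nil-clean analogue in \cite{15}, whose argument transfers to the present setting.
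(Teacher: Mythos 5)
Your proposal follows the same route as the paper's proof: put the $G/H$-grading on $R[H]$, use the degree-preserving augmentation map of \cite[page 180]{14} to obtain a graded isomorphism $R[H]/\Delta(R[H])\cong R$, and then reduce, via Theorem \ref{t3.1}, to showing that the augmentation ideal is graded-nil. The two arguments differ only in how this last point is settled: the paper reduces to a finite $p$-group $H$ by following the proof of Theorem 2.3 in \cite{10} and then disposes of the nilpotency of $\Delta(R[H])$ with a single citation of \cite[Theorem 9]{11} (Connell), whereas you localize each homogeneous element of the augmentation ideal to a finite subgroup $K$ and attempt a self-contained nilpotency proof. Your localization is a sound substitute for the paper's reduction, and your observation that $K$ must also absorb the coefficient degrees, so that the twisted product closes on $S[H_0]$ (using normality of $H$), is a point the paper's proof never makes explicit.

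The gap is exactly the step you flagged, and your first proposed repair fails as stated. With the twisted multiplication $(r_g g')(r_h h')=r_g r_h(h^{-1}g'hh')$, an element $z$ central in $H_0$ satisfies $z\,r_b=r_b\,(b^{-1}zb)$ for $r_b\in R_b$, so $z-1$ is \emph{not} central in $S[H_0]$; worse, a central subgroup $Z\le H_0$ need not be stable under conjugation by $K$, so the quotient map $S[H_0]\to S[H_0/Z]$ may not even exist, and the classical central-subgroup induction breaks down. A repair that does work is to push the twist entirely into the coefficients: for homogeneous coefficients one checks directly that $r_a(h-1)\cdot r_b(h'-1)=r_ar_b\,(b^{-1}hb-1)(h'-1)$, where the product on the right is the \emph{ordinary} one; iterating, every product of $n$ generators of $\Delta(S[H_0])$ lies in the additive span of elements $r\,w$ with $r\in S$ and $w\in\omega(\mathbb{Z}[H_0])^n$, the $n$-th power of the augmentation ideal of the integral group ring (normality keeps all conjugates inside $H_0$). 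Since $H_0$ is a finite $p$-group, $\omega(\mathbb{F}_p[H_0])$ is nilpotent, so $\omega(\mathbb{Z}[H_0])^N\subseteq p\,\mathbb{Z}[H_0]$ for some $N$, hence $\omega(\mathbb{Z}[H_0])^{Ns}\subseteq p^s\,\mathbb{Z}[H_0]$, which is killed by the coefficients once $p^s=0$ in $S$. Note that Connell's theorem, as cited by the paper (and, implicitly, by your fallback to \cite{15}, whose proof also rests on it), is stated for ordinary group rings, so some such reduction is needed on the paper's side as well; with it, your argument is complete.
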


\begin{proof}
Following the proof of Theorem 2.3 in \cite{10}, we may assume that $H$ is a finite $p$-group. According to \cite[page 180]{14}, the augmentation mapping $R[H] \longrightarrow R$ given by $ \sum_{h\in H}r_hh \mapsto \sum_{h\in H}r_h$, where $R[H]$ is considered as a $G/H$-graded ring, is a surjective degree-preserving morphism. Therefore, the kernel of the augmentation mapping, that is the augmentation ideal $\Delta(R[H])$, is homogeneous. This means that $R[H]/\Delta(R[H])$ is a $G/H$-graded ring. Moreover, $R[H]/\Delta(R[H])$ and $R$ are isomorphic as a $G/H$-graded rings. Hence, $R[H]/\Delta(R[H])$ is graded nil-good. Now, since $p$ is nilpotent we have that $\Delta(R[H])$ is nilpotent by \cite[Theorem 9]{11}, and therefore graded-nil. Applying Theorem \ref{t3.1} completes the proof.

\end{proof}
\begin{cor}\label{c4.1}
Let $R$ be a $G$-graded ring where $G$ is a locally finite $2$-group, and let $H$ be a normal subgroup of $G$. Assume that $R_e$ is a nil-clean ring. If $R$ is graded nil-good as a $G/H$-graded ring then $R[H]$ is  graded nil-good as a $G/H$-graded ring.

\end{cor}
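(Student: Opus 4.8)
The plan is to derive this corollary as an immediate specialisation of Theorem \ref{t4.3} to the case $p=2$, the only missing link being the conversion of the nil-clean hypothesis on $R_e$ into the hypothesis that $2$ is nilpotent in $R$. Once that conversion is carried out, every assumption of Theorem \ref{t4.3} is in place and the conclusion follows verbatim.

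First I would observe that in any $G$-graded ring with identity one has $1_R\in R_e$, so that $R_e$ is a subring of $R$ sharing the same unit. Consequently the element $2=1_R+1_R$ lies in $R_e$, and for such an element nilpotency in $R_e$ and nilpotency in $R$ coincide, since $(1_R+1_R)^k$ is computed identically in the subring $R_e$ and in the whole ring $R$.

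Next I would invoke the structure of nil-clean rings: it is known (see \cite{12}) that in any nil-clean ring the element $2$ is nilpotent. Applying this to the nil-clean ring $R_e$ gives $2\in Nil(R_e)$, and hence $2\in Nil(R)$ by the previous step. Thus $p=2$ is nilpotent in $R$.

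Finally, since $G$ is a locally finite $2$-group, $H$ is a normal subgroup of $G$, the element $p=2$ is nilpotent in $R$, and $R$ is graded nil-good as a $G/H$-graded ring by hypothesis, all the assumptions of Theorem \ref{t4.3} are satisfied with $p=2$. Theorem \ref{t4.3} then yields that $R[H]$ is graded nil-good as a $G/H$-graded ring, which is exactly the desired conclusion. I expect the only genuine obstacle to be the justification of the implication ``nil-clean $\Rightarrow$ $2$ nilpotent''; the remainder is a direct appeal to the preceding theorem, so no further computation is needed.
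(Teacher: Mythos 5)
Your proof is correct and follows exactly the paper's own argument: the paper likewise cites \cite[Proposition 3.14]{12} to conclude that $2$ is nilpotent from the nil-clean hypothesis on $R_e$, and then applies Theorem \ref{t4.3} with $p=2$. Your additional remark that nilpotency of $2$ in the subring $R_e$ implies nilpotency in $R$ is a harmless (and correct) elaboration of a step the paper leaves implicit.
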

\begin{proof}
Since $R_e$ is by assumption nil-clean, we have that $2$ is nilpotent by \cite[Proposition 3.14]{12}. Now, the previous theorem completes the proof.

\end{proof}
\begin{prop}\label{c4.2}
Let $R$ be a $G$-graded  ring, where $G$ is a locally finite $p$-group such that $p$ is nilpotent in $R$. If $R$ is graded nil-good then $R[G]$ is  graded nil-good.
\end{prop}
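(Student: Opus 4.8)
The plan is to obtain this as the $H = G$ specialization of Theorem~\ref{t4.3}, re-running that theorem's augmentation-ideal argument for the group ring $R[G]$ itself. All of the standing hypotheses of Theorem~\ref{t4.3} are already in force here: $G$ is a locally finite $p$-group, $H := G$ is (trivially) a normal subgroup of $G$, and $p$ is nilpotent in $R$. So the first step is simply to record that Theorem~\ref{t4.3} applies with $H = G$, the base ring $R$ being graded nil-good by hypothesis.

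Next I would reproduce the engine of Theorem~\ref{t4.3}. Following \cite{10}, local finiteness of $G$ reduces the problem to the case of a finite $p$-group, after which the augmentation map $R[G] \longrightarrow R$, $\sum_{h} r_h h \mapsto \sum_h r_h$, is a surjective degree-preserving homomorphism in the grading of that theorem; its kernel $\Delta(R[G])$ is therefore homogeneous, and $R[G]/\Delta(R[G]) \cong R$ as graded rings, whence $R[G]/\Delta(R[G])$ is graded nil-good because $R$ is.

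The third step is to verify that $\Delta(R[G])$ is graded-nil. Since $p$ is nilpotent in $R$ and the group is a finite $p$-group, \cite[Theorem 9]{11} gives that $\Delta(R[G])$ is nilpotent, hence in particular graded-nil. With a graded-nil ideal whose quotient is graded nil-good in hand, Theorem~\ref{t3.1} immediately yields that $R[G]$ is graded nil-good, completing the argument.

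The step I expect to be the main obstacle is the grading bookkeeping underlying the very first step, namely pinning down exactly which $G$-grading on $R[G]$ the conclusion refers to. Specializing Theorem~\ref{t4.3} to $H = G$ collapses the quotient $G/H$ to the trivial group, whereas the Preliminaries equip $R[G]$ with a genuine $G$-grading; one must check that the augmentation ideal is homogeneous for the grading actually being used and that the reduction to finite $p$-groups together with the nilpotence of $\Delta(R[G])$ survives this identification. Once the two gradings are reconciled, the nilpotence of the augmentation ideal and the final appeal to Theorem~\ref{t3.1} are routine.
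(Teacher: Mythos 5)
Your proposal reproduces the paper's own proof almost verbatim: the paper also reduces to a finite $p$-group following \cite{10}, gets nilpotence of the augmentation ideal $\Delta(R[G])$ from \cite[Theorem 9]{11}, observes that $R[G]/\Delta(R[G])$ and $R$ are isomorphic, and concludes via Theorem \ref{t3.1}. So at the level of strategy you have found the intended argument.

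However, the obstacle you flag in your last paragraph and then defer (``once the two gradings are reconciled\dots'') is not routine bookkeeping; it is the crux, and as stated the reconciliation fails. First, the literal specialization $H=G$ of Theorem \ref{t4.3} is empty for our purposes: the $G/G$-grading is trivial, so both its hypothesis and its conclusion become \emph{ungraded} nil-goodness, and in particular its hypothesis (``$R$ nil-good'') is strictly stronger than the Proposition's hypothesis (``$R$ graded nil-good''), by Example \ref{e3.2}. Second, re-running the augmentation argument with the grading that the Preliminaries actually put on $R[G]$, namely $(R[G])_g=\sum_{h\in G}R_{gh^{-1}}h$ from \cite{13}, breaks down at exactly the point you worried about: $\Delta(R[G])$ is \emph{not} a homogeneous ideal for that grading. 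Indeed, for $g\neq e$ the element $1\cdot e-1\cdot g$ lies in $\Delta(R[G])$, but its homogeneous components are $1\cdot e\in (R[G])_e$ and $-1\cdot g\in (R[G])_g$, and neither has augmentation zero. Since Theorem \ref{t3.1} applies only to graded-nil (hence homogeneous) ideals, it cannot be invoked here. To be fair, the paper's own proof has precisely the same defect: it asserts only that $R[G]/\Delta(R[G])$ and $R$ are isomorphic \emph{as rings} and then cites Theorem \ref{t3.1}, which needs graded data. The argument does become correct if one instead grades $R[G]$ by coefficients, $(R[G])_g:=R_g[G]$ (this is a legitimate $G$-grading for the multiplication of \cite{13}; the augmentation map is then degree-preserving, its kernel is homogeneous and nilpotent, and $R[G]/\Delta(R[G])\cong R$ as graded rings), but then the conclusion concerns that grading rather than the one fixed in the Preliminaries and used elsewhere (e.g.\ in Theorem \ref{t4.4}), so either the statement must be read that way or a genuinely new argument is needed.
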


\begin{proof}
Following the proof of \cite[Theorem 2.]{10}, we can assume that $G$ is a finite $p$-group. On the other hand, by assumption $p$ is nilpotent in $R$, then by \cite[Theorem 9]{11} the augmentation ideal $\Delta(R[G])$ is nilpotent. Moreover, $R[G]/\Delta(R[G])$ and $R$ are isomoprhic as rings, hence by Theorem \ref{t3.1} $R[G]$ is graded nil-good.
\end{proof}

\begin{thm}\label{t4.4}
Let $R=\bigoplus_{g\in G}R_g$ be a $G$-graded ring such that the units and nilpotents of $R$ are all homogeneous. If $R[G]$ is graded nil-good then $R$ is graded nil-good.
\end{thm}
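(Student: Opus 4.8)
The plan is to exploit the augmentation homomorphism $\psi: R[G]\to R$ given by $\sum_{h} r_h h \mapsto \sum_h r_h$. As recalled in the proof of Theorem \ref{t4.3}, this is a surjective ring morphism; the point is that it sends the identity $1_R e$ of $R[G]$ to $1_R$, hence carries units to units and nilpotents to nilpotents, and the hypothesis on $R$ is exactly what lets us transport a graded nil-good decomposition from $R[G]$ back into $R$ along $\psi$.

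First I would reduce to the interesting case. For $a\in R_g$, the monomial $ae$ lies in the homogeneous component $(R[G])_g$, since $(R[G])_g=\sum_{h} R_{gh^{-1}}h$ contains $R_g e$. If $a$ is nilpotent there is nothing to prove, so assume $a\notin Nil(R)$; then $ae\notin Nil(R[G])$, for otherwise $a=\psi(ae)$ would be nilpotent in $R$, contradicting our assumption. Because $R[G]$ is graded nil-good and $ae$ is a non-nilpotent homogeneous element, I may write $ae=U+N$ with $U$ a homogeneous unit and $N$ a homogeneous nilpotent of $R[G]$.

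The key step is to descend this equation. Applying $\psi$, and using that $U$ is a unit and $N$ is nilpotent, I obtain that $u:=\psi(U)$ is a unit of $R$ and $n:=\psi(N)$ is nilpotent in $R$, with $a=\psi(ae)=u+n$. Now I invoke the hypothesis: $u$ and $n$ are homogeneous, say $u\in R_{g_1}$ and $n\in R_{g_2}$. Comparing homogeneous components in the identity $a=u+n$ against $a\in R_g$, exactly as in Remark \ref{r3.1}, forces, whenever $a\neq 0$ and $n\neq 0$, that $g_1=g_2=g$, so $u,n\in R_g$; if $n=0$ then $a=u\in R_g$ is a homogeneous unit, and $a=0$ is nilpotent. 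In every case $a$ is a sum of a homogeneous unit (or zero) and a homogeneous nilpotent of the same degree, i.e. $a$ is graded nil-good, whence $R$ is graded nil-good.

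The main thing to get right is that the twisted multiplication of $R[G]$ recalled in the Preliminaries genuinely makes $\psi$ a ring homomorphism with $\psi(1_{R[G]})=1_R$, so that it preserves the unit and nilpotent properties; granting this, the only essential use of the hypothesis is the homogeneity of $u=\psi(U)$ and $n=\psi(N)$, which is precisely what pins their degrees to $g$ in the component comparison. Notably, no control over the full support of $U$ or $N$ inside $R[G]$ is required, which is what keeps the argument short.
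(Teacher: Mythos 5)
Your proof is correct, but it takes a genuinely different route from the paper's. The paper argues through the identity component: since $R[G]$ is graded nil-good, $(R[G])_e$ is nil-good by Proposition \ref{p3.2}(1), and Năstăsescu's ring isomorphism $f:R\longrightarrow (R[G])_e$, $f(\sum_{g}r_g)=\sum_{g}r_gg^{-1}$ (\cite[Proposition 2.1 (4)]{13}), then makes $R$ nil-good as an ungraded ring; the homogeneity hypothesis on units and nilpotents converts this to graded nil-goodness exactly as in your last step. You instead descend along the augmentation map $\psi:R[G]\longrightarrow R$, which is indeed a surjective unital ring morphism for the twisted multiplication $(r_gg')(r_hh')=r_gr_h(h^{-1}g'hh')$ (one checks $\psi$ is multiplicative on monomials with homogeneous coefficients and that $1_Re$ is the identity of $R[G]$), so it carries units to units and nilpotents to nilpotents; you then transport the graded decomposition of the monomial $ae\in (R[G])_g$ down to $a=u+n$ and pin the degrees with Remark \ref{r3.1}. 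The two arguments are close cousins --- $\psi$ restricted to $(R[G])_e$ is precisely the inverse of $f$ --- but they are organized differently. The paper's route is shorter, uses the graded nil-good hypothesis on $R[G]$ only through its identity component, and yields the stronger intermediate fact that $R$ is nil-good as a ring (every element decomposes, not just every homogeneous one). Your route avoids invoking the isomorphism $R\cong (R[G])_e$ entirely, needing only the augmentation morphism already used in Theorem \ref{t4.3}, at the cost of using graded nil-goodness of $R[G]$ in every component $(R[G])_g$ and of handling the degree bookkeeping (the cases $n=0$ and $a=0$) explicitly, which you do correctly.
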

\begin{proof}
Since $R[G]$ is graded nil-good, $(R[G])_e$ is nil-good. According to \cite[Proposition 2.1 (4)]{13}, the mapping $f:R\longrightarrow (R[G])_e $, given by $f( \sum_{g\in G}r_g)=\sum_{g\in G}r_gg^{-1}$ is a ring isomorphism. Therefore, $R$ is nil-good. On the other hand, all the units and nilpotents of $R$ are homogeneous, hence $R$ is graded nil-good. 

\end{proof}
\begin{cor}\label{ex3.2}
Let $R$ be a $G$-graded ring where $G$ is a locally finite $p$-group such that $p$ is nilpotent in $R$, and the units and nilpotents of $R$ are homogeneous. If $(R[G])_e$ is a nil-good ring then $R[G]$ is a graded nil-good ring.
.
\end{cor}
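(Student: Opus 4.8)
The plan is to reduce this corollary to a direct combination of Theorem~\ref{t4.4} and Proposition~\ref{c4.2}. The strategy proceeds in two stages: first I would deduce from the hypothesis on $(R[G])_e$ that the base ring $R$ is itself graded nil-good, and then I would feed this conclusion into the forward direction of the group-ring construction.

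For the first stage, I would invoke the ring isomorphism $f\colon R\longrightarrow (R[G])_e$ given by $f(\sum_{g\in G}r_g)=\sum_{g\in G}r_gg^{-1}$, which was already used in the proof of Theorem~\ref{t4.4} and comes from \cite[Proposition 2.1 (4)]{13}. Since $(R[G])_e$ is nil-good by assumption and nil-goodness is preserved under ring isomorphism, it follows immediately that $R$ is a nil-good ring. Next I would upgrade this from nil-good to graded nil-good using the homogeneity hypothesis, exactly as in the final lines of the proof of Theorem~\ref{t4.4}. Let $a$ be any homogeneous element of $R$; since $R$ is nil-good, either $a\in Nil(R)$, or $a=u+n$ with $u\in U(R)$ and $n\in Nil(R)$. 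In the first case $a$ is already a homogeneous nilpotent, and in the second case the assumption that all units and all nilpotents of $R$ are homogeneous forces both $u$ and $n$ to be homogeneous, so $a$ is a sum of a homogeneous unit and a homogeneous nilpotent. Hence every homogeneous element of $R$ is graded nil-good, that is, $R$ is graded nil-good.

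Finally, with $R$ now known to be graded nil-good, $G$ a locally finite $p$-group, and $p$ nilpotent in $R$, I would apply Proposition~\ref{c4.2} directly to conclude that $R[G]$ is graded nil-good, which is precisely the desired statement.

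Since every step is merely an application of an already-established result, I do not expect a genuinely hard obstacle here; the only point requiring attention is to verify that the hypotheses of Theorem~\ref{t4.4} (homogeneity of the units and nilpotents of $R$) and of Proposition~\ref{c4.2} ($G$ a locally finite $p$-group with $p$ nilpotent in $R$) are all simultaneously available, which they are by assumption. The mild subtlety worth flagging is the intermediate upgrade from nil-good to graded nil-good, where the homogeneity hypothesis is genuinely essential and cannot be dropped.
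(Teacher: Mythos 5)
Your proposal is correct and follows essentially the same route as the paper: identify $R$ with $(R[G])_e$ to get that $R$ is nil-good, use the homogeneity of units and nilpotents to upgrade this to graded nil-good, then apply Proposition~\ref{c4.2}. In fact your write-up is more careful than the paper's own proof, which compresses the upgrade step into the terse phrase ``By assumption, $R$ is a graded nil-good ring''; your explicit verification of that step is exactly what is intended there.
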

\begin{proof}
We have that $R \cong (R[G])_e$ so $R$ is also nil good. By assumption, $R$ is a graded nil-good ring. Hence, by Corollary \ref{c4.2}, $R[G]$  will be a graded nil-good ring. 
\end{proof}

\subsection{Matrix rings} 
 In \cite[Corollary 4.2]{16}, it has been shown that if $R$ is a $G$-graded 2-good ring, then $M_n(R)(\overline{\sigma})$ is graded 2-good for every natural number $n$ and for every $\overline{\sigma}\in G^n$. Also, in \cite[Theorem 3.18]{15} it is proved that a matrix ring over a graded clean ring is also graded clean.  The author in \cite[Corollary 4.3]{16} has found a similar result concerning  graded 2-nil-good rings which are cross product. 

In this subsection, we try to obtain a similar result for graded nil-good rings.

The example below shows that if $R$ is a $G$-graded nil-good ring then $M_n(R)(\overline{\sigma})$ is not necessarily graded nil-good for every natural number $n$ and for every $\overline{\sigma}\in G^n$.

\begin{ex}\label{e4.3}
Let $G=\{e,g\}$ be a group of order two and let $R:=\mathbb{Z}_2\propto \mathbb{Z}_2$. We have that $R=(\mathbb{Z}_2\propto 0 )\bigoplus (0 \propto \mathbb{Z}_2)$ is a $G$-graded ring with $R_e=\mathbb{Z}_2\propto 0$ and $R_g=0 \propto \mathbb{Z}_2$. Homogeneous elements of $R$ are $(0,0)$, $(1,0)$ and $(0,1)$. We have that $(1,0) \in U(R)$ and $(0,1)\in Nil(R)$, and so $R$ is graded nil-good.\\Let $\overline{\sigma}=(e,e)$. We claim that $M_2(R)(\overline{\sigma})$ is not graded nil-good. Indeed, it is easy to show that the homogeneous element $M=\begin{pmatrix}
(1,0) &(0,0) \\ (0,0) &(0,0)
\end{pmatrix} \in M_2(R)_e(\overline{\sigma})$ is not graded nil-good. In fact, if we suppose that $M$ is graded nil-good, we obtain that $(1,0)$ is nilpotent which is a contradiction since it is a unit of $R$.
\end{ex}

Now, we give a sufficient condition for the matrix ring to be graded nil-good. First, we recall the notion of matrix "\textit{in good form}" defined in \cite{22}.
\begin{defo}[\cite{22}]
Let $R$ be a ring and $n \geq 2$  and let $\begin{pmatrix}
A&\beta \\ \gamma &d
\end{pmatrix}\in M_n(R)$ where $A\in M_{n-1}(R)$ and $d\in R$.  We say that the matrix $\begin{pmatrix}
A&\beta \\ \gamma &d
\end{pmatrix}$ is "in good form" if $A$ is nonzero and $d$ is also nonzero.
\end{defo}
 Next, we give a graded version of the notion \textit{fine rings} introduced in \cite{22}.
\begin{defo}
Let $R$ be a $G$-graded ring. $R$ is said to be graded fine  if every nonzero homogeneous element of $R$ can be written as a sum of a homogeneous unit and a  homogeneous nilpotent.
\end{defo}

Before, presenting the main result of this section, we need the next two Lemmas concerning the graded jacobson radical of matrix rings.

\begin{lemma}\label{l4.1}
If $R_1,R_2,\dots,R_n$  be $n$ $G$-graded rings and $A:=R_1\times \dots \times R_n$. Then, $J^g(A)=J^g(R_1)\times \dots \times J^g(R_n)$.
\end{lemma}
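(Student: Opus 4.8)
The plan is to establish the equality $J^g(A) = J^g(R_1) \times \dots \times J^g(R_n)$ by a double inclusion, leveraging the description of the graded Jacobson radical as the intersection of all graded-maximal right ideals together with the product structure of $A$. By an obvious induction it suffices to treat the case $n=2$, writing $A = R_1 \times R_2$ with the componentwise $G$-grading $A_\lambda = (R_1)_\lambda \times (R_2)_\lambda$ for each $\lambda \in G$; a homogeneous element of $A$ is thus a pair $(a_1, a_2)$ with $a_1 \in (R_1)_\lambda$ and $a_2 \in (R_2)_\lambda$ for a common degree $\lambda$.

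The key structural observation I would record first is the correspondence between homogeneous right ideals of $A$ and pairs of homogeneous right ideals of the factors. If $I_1$ is a homogeneous right ideal of $R_1$ and $I_2$ one of $R_2$, then $I_1 \times I_2$ is a homogeneous right ideal of $A$, and conversely every homogeneous right ideal of $A$ splits this way because the central idempotents $(1,0)$ and $(0,1)$ lie in the identity component $A_e$ and hence act on homogeneous elements without changing their degree. From this I would deduce the crucial fact that the graded-maximal right ideals of $A$ are exactly the ideals of the form $\mathfrak{m}_1 \times R_2$ and $R_1 \times \mathfrak{m}_2$, where $\mathfrak{m}_i$ ranges over the graded-maximal right ideals of $R_i$: a proper homogeneous right ideal $I_1 \times I_2$ is maximal precisely when one factor is the whole ring and the other is graded-maximal.

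Granting this classification, the result follows by taking intersections. Indeed,
\begin{align*}
J^g(A) &= \Bigl(\bigcap_{\mathfrak{m}_1} (\mathfrak{m}_1 \times R_2)\Bigr) \cap \Bigl(\bigcap_{\mathfrak{m}_2} (R_1 \times \mathfrak{m}_2)\Bigr) \\
&= \Bigl(\bigl(\bigcap_{\mathfrak{m}_1}\mathfrak{m}_1\bigr) \times R_2\Bigr) \cap \Bigl(R_1 \times \bigl(\bigcap_{\mathfrak{m}_2}\mathfrak{m}_2\bigr)\Bigr) \\
&= J^g(R_1) \times J^g(R_2),
\end{align*}
where $\mathfrak{m}_i$ runs over the graded-maximal right ideals of $R_i$ in each intersection. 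The general case then follows by induction on $n$, treating $A = R_1 \times (R_2 \times \dots \times R_n)$.

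I expect the main obstacle to be the classification of graded-maximal right ideals of $A$, specifically the verification that every graded-maximal right ideal splits as a product with one factor equal to the full ring. This requires arguing that if $\mathfrak{m} = I_1 \times I_2$ is a proper homogeneous right ideal with both $I_1 \subsetneq R_1$ and $I_2 \subsetneq R_2$, then $\mathfrak{m}$ is strictly contained in the proper homogeneous right ideal $R_1 \times I_2$ (or $I_1 \times R_2$), contradicting maximality. The splitting of homogeneous right ideals via the central homogeneous idempotents $(1,0)$ and $(0,1)$ is the technical heart here, and care is needed only to confirm that these idempotents, lying in the identity component, preserve homogeneity so that the decomposition $I = (1,0)I \oplus (0,1)I$ respects the grading.
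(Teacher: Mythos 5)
Your proof is correct, and it follows the same overall strategy as the paper's one-line argument --- classify the graded-maximal right ideals of the product, then intersect --- but your version of the key classification is the correct one, while the paper's is not. The paper asserts that the graded-maximal right ideals of $A$ are exactly the products $M_1 \times \dots \times M_n$ with \emph{every} $M_i$ graded-maximal in $R_i$; this is false as stated (already for $A = \mathbb{Z}_2 \times \mathbb{Z}_2$ with trivial grading, $\{0\} \times \{0\}$ is not a maximal right ideal, and the genuine maximal right ideals $\mathbb{Z}_2 \times 0$ and $0 \times \mathbb{Z}_2$ are not products of maximal ideals). Your classification --- one factor graded-maximal and all other factors equal to the whole ring, established via the splitting of homogeneous right ideals by the central homogeneous idempotents $(1,0),(0,1) \in A_e$ --- is the right statement, and your argument for it (a product with two proper factors embeds strictly into a larger proper homogeneous right ideal) is sound. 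The two classifications happen to produce the same intersection, namely $J^g(R_1) \times \dots \times J^g(R_n)$, which is why the lemma's conclusion is true despite the paper's faulty intermediate claim; what your write-up buys is a proof that is actually complete, at the modest cost of the idempotent-splitting verification and the short induction from $n=2$ to general $n$, both of which you handle correctly.
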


\begin{proof}

This follows directly from the fact that  graded-maximal right ideals of $A$ are exactly the ideals of the form $M_1\times \dots \times M_n$, where $M_i$ is a graded-maximal right ideal of $R_i$ for each $i$.

\end{proof}

\begin{lemma}\label{l4.2}
Let $R$ be a $G$-graded ring and $\overline{\sigma}=(e,e,\dots,e)$. For any natural number $n$, we have that: $$J^g(M_n(R))= M_n(J^g(R))$$

\end{lemma}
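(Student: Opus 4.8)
The plan is to prove the two inclusions separately, exploiting the special feature of the grading attached to $\overline{\sigma}=(e,\dots,e)$: here the $\lambda$-component of $M_n(R)$ is precisely the set of matrices all of whose entries lie in $R_\lambda$, and in particular every matrix unit $e_{ij}$ is homogeneous of degree $e$. First I would record two structural facts. Since $J^g(R)$ is a homogeneous two-sided ideal of $R$ (by \cite[Proposition 2.9.1]{14}), its entrywise matrix ring $M_n(J^g(R))$ is a homogeneous two-sided ideal of $M_n(R)$; and, because the $(1,1)$-entry of $M_n(R)_\lambda(\overline\sigma)$ is $R_{e\lambda e^{-1}}=R_\lambda$, the corner ring satisfies $e_{11}M_n(R)e_{11}\cong R$ as $G$-graded rings via $re_{11}\mapsto r$.

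For the inclusion $J^g(M_n(R))\subseteq M_n(J^g(R))$, I would extract entries using the matrix units. If $X=(x_{kl})\in J^g(M_n(R))$, then, since $J^g(M_n(R))$ is two-sided and the $e_{ij}$ are homogeneous, $e_{1k}Xe_{l1}=x_{kl}e_{11}\in J^g(M_n(R))$ for all $k,l$. Applying the graded corner identity $e_{11}J^g(M_n(R))e_{11}=J^g(e_{11}M_n(R)e_{11})$ together with the isomorphism $e_{11}M_n(R)e_{11}\cong R$, I would conclude $x_{kl}\in J^g(R)$, whence $X\in M_n(J^g(R))$.

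For the reverse inclusion $M_n(J^g(R))\subseteq J^g(M_n(R))$, I would argue at the level of graded-simple modules, using that $J^g(S)$ is the intersection of the annihilators of the graded-simple right $S$-modules. Given a graded-simple right $M_n(R)$-module $N$, the orthogonal homogeneous idempotents $e_{11},\dots,e_{nn}$ (which sum to $I_n$) decompose $N=\bigoplus_i Ne_{ii}$, and each $Ne_{kk}$ is a graded-simple right module over $e_{kk}M_n(R)e_{kk}\cong R$; being graded-simple, it is annihilated by $J^g(R)$. A short computation then gives $NX=0$ for every $X=(x_{kl})\in M_n(J^g(R))$: writing $x_{kl}e_{kl}=(x_{kl}e_{kk})e_{kl}$ and noting that $x_{kl}e_{kk}$ acts on $Ne_{kk}$ as the element $x_{kl}\in J^g(R)$, each term vanishes. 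Thus $M_n(J^g(R))$ lies in the annihilator of every graded-simple $M_n(R)$-module, i.e. in $J^g(M_n(R))$.

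The main obstacle is the passage between the graded module theory of $R$ and that of $M_n(R)$: concretely, the graded corner identity $\epsilon\,J^g(S)\,\epsilon=J^g(\epsilon S\epsilon)$ for the degree-$e$ idempotent $\epsilon=e_{11}$, and the statement that $Ne_{11}$ is graded-simple over the corner ring. These are the graded analogues of the Morita reduction used classically to prove $J(M_n(R))=M_n(J(R))$, and the only delicate point is verifying that all the relevant idempotents and matrix units are homogeneous of degree $e$ — which is exactly what the choice $\overline{\sigma}=(e,\dots,e)$ guarantees. Once these graded inputs from \cite{14} are in place, both inclusions reduce to routine matrix-unit bookkeeping.
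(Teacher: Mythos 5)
Your proof is correct (up to two small points noted below), but it is genuinely different from the one in the paper --- and, as it happens, it is on firmer ground. The paper's own argument takes the canonical degree-preserving bijection $\phi\colon R^{n^2}\longrightarrow M_n(R)$, applies Lemma \ref{l4.1} to get $J^g(R^{n^2})=(J^g(R))^{n^2}$, and then writes $J^g(M_n(R))=J^g\bigl(\phi(R^{n^2})\bigr)=\phi\bigl(J^g(R^{n^2})\bigr)$. But $\phi$ is only an isomorphism of graded additive groups: the product ring $R^{n^2}$ carries componentwise multiplication, not matrix multiplication, and the graded Jacobson radical is an invariant of the multiplicative structure, so it cannot be transported along a map that is not a ring isomorphism (compare $k\times k$ with $k[x]/(x^2)$: additively isomorphic, yet with different radicals). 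So the paper's proof, as written, has a gap even though the statement is true. Your Morita-style argument --- extracting entries via $e_{1k}Xe_{l1}=x_{kl}e_{11}$ using the homogeneous degree-$e$ matrix units, the graded corner identity $e_{11}J^g(S)e_{11}=J^g(e_{11}Se_{11})$, and the characterization of $J^g$ as the intersection of annihilators of graded-simple right modules --- respects the ring structure throughout and is the graded analogue of the classical proof of $J(M_n(R))=M_n(J(R))$; what it costs is that the two genuinely graded inputs must be justified. Two points to tighten: first, the graded corner identity is the real content of your proof, so you should either cite it precisely from \cite{14} or prove it (the ungraded argument carries over with graded-simple modules in place of simple ones, using that any nonzero homogeneous idempotent automatically has degree $e$); second, in your second inclusion some of the pieces $Ne_{kk}$ may vanish, so say ``graded-simple or zero'' --- the annihilation conclusion is unaffected. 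With those touches your argument is complete, and it would in fact serve as a corrected replacement for the proof given in the paper.
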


\begin{proof}

Let $\phi:R^{n^2} \longrightarrow M_n(R)$ be the canonical isomoprhism. It is clean that $\phi$ is a degree-preserving isomorphism. Hence, $J^g(M_n(R))=J^g\big(\phi (R^{n^2})\big)=\phi \big(J^g(R^{n^2})\big)$. On the other hand, according to Lemma \ref{l4.1} we have that $J^g(R^{n^2})=(J^g(R))^{n^2}$. Therefore, $J^g(M_n(R))=\phi \big((J^g(R))^{n^2} \big)=M_n(J^g(R))$, and  the proof is completed.

\end{proof}

\begin{thm}\label{t4.8}

Let $R$ be a $G$-graded commutative ring of finite support. Assume  $1=u+v$ where $u,v \in U(R_e)$. If $R$ is graded nil-good, then $M_2(R)(\overline{\sigma})$ is graded nil-good where $\overline{\sigma}=(e,e)$.
\end{thm}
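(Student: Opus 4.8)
The plan is to reduce, via the graded Jacobson radical, to the case of a matrix ring over a graded field, and then to dispatch that case by an explicit $2\times 2$ decomposition into which the hypothesis $1=u+v$ feeds. Throughout, $\overline{\sigma}=(e,e)$, so by the description in the preliminaries a homogeneous element of $M_2(R)(\overline{\sigma})$ of degree $\lambda$ is a matrix all of whose entries lie in $R_\lambda$.

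First I would pass to $\overline{R}:=R/J^g(R)$. Since $R$ is commutative and graded nil-good, Lemma \ref{l3.1} gives that $J^g(R)$ is graded-nil, and Lemma \ref{l4.2} (which applies precisely because $\overline{\sigma}=(e,e)$) gives $J^g(M_2(R)(\overline{\sigma}))=M_2(J^g(R))$. I would then check that $M_2(J^g(R))$ is a graded-nil ideal: a homogeneous matrix there has all entries in the nil ideal $J^g(R)$, and over a commutative ring finitely many nilpotents generate a nilpotent ideal, so such a matrix is nilpotent. The entrywise surjection $M_2(R)(\overline{\sigma})\to M_2(\overline{R})(\overline{\sigma})$ is degree-preserving with kernel $M_2(J^g(R))$, so by Theorem \ref{t3.1} it suffices to prove that $M_2(\overline{R})(\overline{\sigma})$ is graded nil-good.

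Next I would identify $\overline{R}$ as a \emph{graded field}. It is commutative graded nil-good (Corollary \ref{c3.2}) with $J^g(\overline{R})=0$, and by Proposition \ref{p3.2}(2) every homogeneous element is a unit or nilpotent. The one structural point needing care is that every homogeneous nilpotent lies in $J^g$, hence is $0$ here: each graded-maximal ideal $\mathfrak{m}$ is graded-prime, since the graded-simple commutative quotient $\overline{R}/\mathfrak{m}$ has all nonzero homogeneous elements invertible and so is a graded domain, whence $n^k\in\mathfrak{m}$ forces $n\in\mathfrak{m}$. Thus every nonzero homogeneous element of $\overline{R}$ is a unit; each component $\overline{R}_\lambda$ in the support is a rank-one free module $K t_\lambda$ over the field $K:=\overline{R}_e$ for some homogeneous unit $t_\lambda$, and the relation $1=u+v$ descends to $1=\overline{u}+\overline{v}$ with $\overline{u},\overline{v}\in U(K)=K\setminus\{0\}$ (so in particular $K\neq\mathbb{F}_2$).

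Finally I would treat a homogeneous matrix $M$ of degree $\lambda$ over $\overline{R}$. If $\overline{R}_\lambda=0$ then $M=0$ is nilpotent; otherwise write $M=t_\lambda M_0$ with $M_0\in M_2(K)$, reducing everything to showing $M_2(K)$ is nil-good, because if $M_0=U_0+N_0$ with $U_0$ invertible and $N_0$ square-zero, then $t_\lambda U_0$ is a homogeneous unit of degree $\lambda$ (its determinant $t_\lambda^2\det U_0$ is a homogeneous unit) and $t_\lambda N_0$ a homogeneous nilpotent, while if $M_0$ is nilpotent so is $t_\lambda M_0$. To decompose $M_0=\left(\begin{smallmatrix}\alpha&\beta\\\gamma&\delta\end{smallmatrix}\right)$ I would subtract a general trace-zero square-zero nilpotent $N_0=\left(\begin{smallmatrix}x&y\\z&-x\end{smallmatrix}\right)$ with $x^2+yz=0$, for which $\det(M_0-N_0)=\det M_0+(\alpha-\delta)x+\beta z+\gamma y$; after conjugating $M_0$ by a degree-$e$ unit into good form (where the availability of $1=\overline{u}+\overline{v}$ over $K\neq\mathbb{F}_2$ is exactly what makes the nonzero-diagonal normalization possible), a short case analysis lets me choose $x,y,z$ so that this determinant is nonzero whenever $M_0$ is not nilpotent. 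This is precisely the assertion that $M_2(K)$ is a fine ring in the sense of \cite{22}.

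The hard part is this last step: organizing the good-form reduction and case analysis over $K$ so that the unit summand really is invertible and the nilpotent summand really is square-zero, which is where the hypothesis $1=u+v$ is consumed. The graded-field reduction — specifically the fact that homogeneous nilpotents sit inside $J^g$ — is the other point requiring attention, but it is routine once graded-maximal ideals are recognized as graded-prime.
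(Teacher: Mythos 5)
Your proof is correct, and although it shares the paper's overall skeleton (pass to $\overline{R}=R/J^g(R)$, show the quotient matrix ring is graded fine via a good-form decomposition, then lift through Theorem \ref{t3.1} using Lemma \ref{l4.2}), it executes both technical halves by genuinely different means. For the graded-nilness of $J^g(M_2(R))=M_2(J^g(R))$, the paper runs through heavy machinery: nil-goodness of $M_2(R_e)$ via \cite[Theorem 2.19]{1}, Amitsur--Levitzki to see $M_2(R)$ is a PI-ring, \cite[Theorem 3]{8} to make $J(M_2(R))$ nil, and \cite[Corollary 2.9.4]{14} to squeeze $J^g$ inside $J$ --- this last step being exactly where the finite-support hypothesis is consumed. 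Your entrywise argument (a homogeneous matrix in $M_2(J^g(R))$ has nilpotent entries in a commutative ring, hence is nilpotent) is elementary, shorter, and does not use finite support at all, so your route actually proves a slightly stronger statement. For the quotient step, the paper applies \cite[Corollary 3.7]{22} directly to the graded ring $\overline{R}$ and asserts, ``following the proof,'' that the conjugating matrix lies in $M_2(\overline{R}_e)$ and that the good form is homogeneous; you instead exhibit the graded-field structure $\overline{R}_\lambda=Kt_\lambda$ over the honest field $K=\overline{R}_e$ and factor each homogeneous matrix as $t_\lambda M_0$, reducing the graded problem to the ungraded statement that $M_2(K)$ is fine for $K\neq\mathbb{F}_2$; this is a cleaner way to control homogeneity and it makes explicit where $1=u+v$ enters, something the paper never states. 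You also close a gap the paper glosses over: Proposition \ref{p3.2}(2) only yields ``unit or nilpotent,'' and your graded-prime argument that homogeneous nilpotents lie in $J^g(R)$ is what legitimately upgrades this to ``every nonzero homogeneous element of $\overline{R}$ is a unit.'' The one place you are sketchier than you should be is the closing case analysis showing $M_2(K)$ is fine, but that is precisely the content of \cite[Corollary 3.7]{22}, so citing it there --- as the paper itself does --- completes your argument; note, in fact, that the main theorem of \cite{22} gives fineness of $M_2(K)$ for \emph{every} field, which would eliminate the hypothesis $1=u+v$ from the statement altogether.
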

\begin{proof}
Since $R$ is commutative, by Proposition \ref{p3.2} (2) we have that every nonzero homogeneous element of $\overline{R}=R/J^g(R)$ is unit. Let $M$ be a homogeneous nonzero matrix of $M_2(\overline{R})(\overline{\sigma})$. According to \cite[Corollary 3.7]{22}, $M$ is similar to a matrix in good form. Hence, there exist a invertible matrix $V$  and a matrix $A$ in good form of $M_2(\overline{R})(\overline{\sigma})$  such that $M=VAV^{-1}$. Following the proof of \cite[Corollary 3.7]{22}, we deduce that $V$ is invertible in $M_2(\overline{R}_e)$ and $A$ is homogeneous. Now, since $A$ is in good form we have that $A=\begin{pmatrix}u_1&b \\c & u_2 \end{pmatrix}=\begin{pmatrix}u_1&b \\0& u_2 
\end{pmatrix}+\begin{pmatrix}0&0\\c&0
\end{pmatrix}=U+N$ where $u_1,u_2\in U(\overline{R})$ is a graded nil-good decomposition of $A$. Hence, $M=VUV^{-1}+VNV^{-1}$, since $VUV^{-1}$ is invertible and $VNV^{-1}$ is nilpotent and both are homogeneous, we deduce that $M$ is graded fine which implies that $M_2(\overline{R})(\overline{\sigma})$ is graded fine and so it is graded nil-good. Using the homomorphism $f: M_2(R)\longrightarrow M_2(\overline{R})$ defined by $f\begin{pmatrix}
a&b\\
c&d
\end{pmatrix}= \begin{pmatrix}
\bar a&\bar b\\
\bar c&\bar d
\end{pmatrix}$ we find that $M_2(\overline{R})(\overline{\sigma})\cong M_2(R)/M_2(J^g(R)$ and so $M_2(R)/M_2(J^g(R))$ is graded nil-good. On the other hand, by Lemma \ref{l4.2} $M_2(J^g(R)) = J^g(M_2(R))$. Moreover, by \cite[Theorem 2.19]{1} we have that $M_2(R)_e(\overline{\sigma})=M_2(R_e)$ is nil good since $R_e$ is nil-good, this implies that $J\big(M_2(R)_e(\overline{\sigma})\big)$ is nil (by \cite[Proposition 2.5]{1}). In addition, by Amitsur-Levitski theorem (see \cite{19}) we have  that $M_2(R)$ is $PI$-ring. Now according to \cite[Theorem 3]{8} we have that $J(M_2(R))$ is nil. Since $R$ has a finite support, then by \cite[Corollary 2.9.4]{14} we get that $J^g(M_2(R)) \subseteq J(M_2(R))$. Hence, $J^g(M_2(R))$ is graded-nil. Finally, by Theorem \ref{t3.1} $M_2(R)(\overline{\sigma})$ is graded nil-good.

\end{proof}

\begin{cor}\label{c4.3}
Let $R$ be a $G$-graded commutative ring of finite support. Assume that $1=u+v$ where $ u,v \in U(R_e)$. If $R$ is graded nil-good and $n$ a natural number, then $M_n(R)(\overline{\sigma})$ is graded nil-good where $\overline{\sigma}=(e,\dots,e)\in G^n$.
\end{cor}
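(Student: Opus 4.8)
The plan is to extend the proof of Theorem \ref{t4.8} from $n=2$ to arbitrary $n$; the case $n=1$ is just the hypothesis that $R$ is graded nil-good. Writing $\overline{R}:=R/J^g(R)$, the commutativity of $R$ together with Proposition \ref{p3.2}(2) and Lemma \ref{l3.1} shows that every nonzero homogeneous element of $\overline{R}$ is a unit, so $\overline{R}$ is a graded field. Exactly as for $n=2$, the problem separates into two parts: proving that $M_n(\overline{R})(\overline{\sigma})$ is graded fine, and transferring this back to $M_n(R)(\overline{\sigma})$ through the graded Jacobson radical. I would treat the graded-fineness by induction on $n$ and the transfer uniformly.

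For the transfer I would reuse, for general $n$, each ingredient already present in the proof of Theorem \ref{t4.8}. By \cite[Theorem 2.19]{1} the ring $M_n(R_e)=M_n(R)_e(\overline{\sigma})$ is nil-good (as $R_e$ is), so $J(M_n(R_e))$ is nil by \cite[Proposition 2.5]{1}. The Amitsur--Levitzki theorem makes $M_n(R)$ a $PI$-ring, whence $J(M_n(R))$ is nil by \cite[Theorem 3]{8}; and since $R$ has finite support, \cite[Corollary 2.9.4]{14} gives $J^g(M_n(R))\subseteq J(M_n(R))$, so $J^g(M_n(R))$ is graded-nil. Finally Lemma \ref{l4.2} supplies $J^g(M_n(R))=M_n(J^g(R))$ and the graded isomorphism $M_n(R)/J^g(M_n(R))\cong M_n(\overline{R})(\overline{\sigma})$, after which Theorem \ref{t3.1} reduces the corollary to the graded-fineness of $M_n(\overline{R})(\overline{\sigma})$.

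To prove that $M_n(\overline{R})(\overline{\sigma})$ is graded fine I would induct on $n$, the base case $n=1$ being immediate since every nonzero homogeneous element of $\overline{R}$ is a unit. Let $M$ be a nonzero homogeneous matrix of degree $\lambda$, so that all its entries lie in $\overline{R}_\lambda$. Following the general-$n$ analogue of \cite[Corollary 3.7]{22}, and using the hypothesis $1=u+v$ with $u,v\in U(R_e)$ to carry out the reduction with degree-$e$ elementary matrices, I would conjugate $M$ by some invertible $V\in M_n(\overline{R}_e)$ to a homogeneous matrix in good form
$$A=\begin{pmatrix} A' & \beta \\ \gamma & d \end{pmatrix},\qquad A'\in M_{n-1}(\overline{R}),\ A'\neq 0,\ d\neq 0.$$
Here $A'$ is a nonzero homogeneous matrix of degree $\lambda$ in $M_{n-1}(\overline{R})(\overline{\sigma}')$, with $\overline{\sigma}'=(e,\dots,e)\in G^{n-1}$, and $d\in\overline{R}_\lambda$ is nonzero, hence a unit. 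By the inductive hypothesis $A'=U'+N'$ with $U'$ a homogeneous unit and $N'$ a homogeneous nilpotent, both of degree $\lambda$; setting
$$U=\begin{pmatrix} U' & \beta \\ 0 & d \end{pmatrix},\qquad N=\begin{pmatrix} N' & 0 \\ \gamma & 0 \end{pmatrix},$$
the matrix $U$ is invertible (block upper triangular with invertible diagonal blocks $U'$ and $d$) and $N$ is nilpotent (block lower triangular with nilpotent diagonal blocks $N'$ and $0$), while both are homogeneous of degree $\lambda$. Conjugating back, $M=VUV^{-1}+VNV^{-1}$ exhibits $M$ as a homogeneous unit plus a homogeneous nilpotent, so $M_n(\overline{R})(\overline{\sigma})$ is graded fine and in particular graded nil-good.

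I expect the reduction of $M$ to good form by a degree-$e$ conjugation to be the main obstacle: this is precisely the step that requires $1=u+v$ with $u,v\in U(R_e)$, the condition that fails for $\overline{R}_e\cong\mathbb{Z}_2$ (compare Example \ref{e4.3}), and it amounts to verifying that the elementary transformations behind \cite[Corollary 3.7]{22} can all be performed inside $M_n(\overline{R}_e)$. Once good form is available, the block decomposition combined with the inductive hypothesis is a short check, and the return from $\overline{R}$ to $R$ is the same Jacobson-radical argument as in Theorem \ref{t4.8}.
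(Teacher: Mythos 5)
Your proposal is correct and follows essentially the same route as the paper: induction on $n$ to show $M_n(\overline{R})(\overline{\sigma})$ is graded fine via conjugation to good form (the paper cites \cite[Proposition 3.9]{22} for this) and the same block decomposition $\bigl(\begin{smallmatrix} U' & \beta \\ 0 & d \end{smallmatrix}\bigr)+\bigl(\begin{smallmatrix} N' & 0 \\ \gamma & 0 \end{smallmatrix}\bigr)$, followed by the identical Jacobson-radical transfer argument (Lemma \ref{l4.2}, Amitsur--Levitzki, \cite[Theorem 3]{8}, and Theorem \ref{t3.1}) from Theorem \ref{t4.8}. The only cosmetic differences are that you start the induction at $n=1$ rather than $n=2$ and spell out the transfer step for general $n$ instead of citing the proof of Theorem \ref{t4.8}.
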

\begin{proof}
Using mathematical induction on $n$, we will prove that $M_n(\overline{R})(\overline{\sigma})$ is graded fine where $\overline{R}=R/J^g(R)$.\\For $n=2$, see the proof of Theorem \ref{t4.8}.\\Now, assume that $n\geq 2$ and that the claim holds for $M_n(\overline{R})(\overline{\sigma}) $. According to \cite[Proposition 3.9]{22}, every nonzero homogeneous matrix of $M_{n+1}(\overline{R})(\overline{\sigma})$ is similar to a homogeneous matrix in good form. Moreover, the change of basis matrix has to be from $M_{n+1}(\overline{R}_e)$.\\
Let $A\in M_{n+1}(\overline{R})(\overline{\sigma}) $ be a homogeneous matrix in good form. We have that $A=\begin{pmatrix}M&\beta \\ \gamma & d  \end{pmatrix}$ where $M\in M_n(\overline{R})$ nonzero and $d\in U(\overline{R})$. Since by assumption $M_n(\overline{R})(\overline{\sigma})$ is graded fine, then $M=U+N$ where $U$ (resp. $N$) is an invertible (resp. a nilpotent) matrix of $M_n(\overline{R})$. Hence, $A=\begin{pmatrix}U&\beta \\ 0 & d \end{pmatrix}+\begin{pmatrix} N&0\\ \gamma &0 \end{pmatrix}$ is a graded fine decomposition of $A$. Thus, we deduce that $M_n(\overline{R})(\overline{\sigma})$ is graded fine. Now, following the second part of the proof of Theorem \ref{t4.8}, we get that that $M_n(R)(\overline{\sigma})$ is graded nil-good.

\end{proof}

\end{document}